\newcounter{numberofremark}
\newcommand\nothing[1]{}
\newcommand{\dcl}{\DeclareMathOperator}
\dcl\cdet{cdet} \dcl\Sp{Specm} \dcl\depth{depth} \dcl\im{Im} \dcl\Span{span} \dcl\Ker{Ker} \dcl\Specm{Specm}
\dcl\Supp{Supp} \dcl\codim{codim} \dcl\Y{Y} \dcl\gl{\mathfrak{gl}}    \dcl\U{U} \dcl\T{T}
\dcl\qdet{qdet} \dcl\sgn{sgn} \dcl\gr{gr} \dcl\diag{diag}
\dcl\g{\mathfrak{g}} \dcl\C{\mathbb C} \dcl\dd{{\mathrm d}}
\dcl\End{End} 
\newcommand\sn{{\mathsf n}}
\newcommand\sm{{\mathsf m}}
\newcommand\Ga{{\Gamma}}
\newlength\yStones
\newlength\xStones
\newlength\xxStones
\def\Stones{\pst@object{Stones}}
\def\Stones@i#1{%
  \pst@killglue%
  \begingroup%
  \use@par%
  \setlength\xxStones{\xStones}%
  \expandafter\Stones@ii#1,,\@nil
  \endgroup
  \global\addtolength\xStones{0.6cm}%
  \global\addtolength\yStones{-7.5mm}}%
\def\Stones@ii#1,#2,#3\@nil{%
  \rput(\xxStones,\yStones){%
    \psframebox[framesep=0]{%
      \parbox[c][6mm][c]{11mm}{\makebox[11mm]{$#1$}}}}%
  \addtolength\xxStones{1.2cm}%
  \ifx\relax#2\relax\else\Stones@ii#2,#3\@nil\fi}
\def\Stone#1{\fbox{\makebox[13mm]{\strut#1}}\kern2pt}
\newtheorem{theorem}{Theorem}[section]
\newtheorem{lemma}[theorem]{Lemma}
\newtheorem{corollary}[theorem]{Corollary}
\newtheorem{proposition}[theorem]{Proposition}
\newtheorem{example}[theorem]{Example}
\newtheorem{remark}[theorem]{Remark}
\begin{document}
\title[Drinfeld category and singular  Gelfand-Tsetlin modules]{Drinfeld category and the classification of singular  Gelfand-Tsetlin $\displaystyle \mathfrak{gl}_n$-modules}

\author[V.Futorny]{Vyacheslav Futorny}

\address{Instituto de Matem\'atica e Estat\'istica, Universidade de S\~ao
Paulo,  S\~ao Paulo SP, Brasil} \email{futorny@ime.usp.br,}
\author[D.Gratcharov]{Dimitar Grantcharov}
\address{\noindent
University of Texas at Arlington,  Arlington, TX 76019, USA} \email{grandim@uta.edu}
%\author[A.Molev]{Alexander Molev}
%\address{\noindent
%University of Sydney,  Sydney, Australia} \email{alexm@maths.usyd.edu.au }
\author[L.E.Ramirez]{Luis Enrique Ramirez}
\address{Universidade Federal do ABC,  Santo Andr\'e-SP, Brasil} \email{luis.enrique@ufabc.edu.br,}

\begin{abstract}
We prove a uniqueness theorem for  irreducible non-critical Gelfand-Tsetlin modules. The uniqueness result leads to  a
complete classification of the irreducible Gelfand-Tsetlin modules with $1$-singularity. An explicit construction of  such modules was given  in \cite{FGR2}. In particular, we show  that the modules constructed in \cite{FGR2}
 exhaust all irreducible  Gelfand-Tsetlin modules with   $1$-singularity. To prove the result we introduce a new category of modules (called Drinfeld category) related to the Drinfeld generators of the Yangian $Y(\gl_n)$ and  define a functor from the category  of non-critical Gelfand-Tsetlin modules to the Drinfeld category. 
\end{abstract}

\subjclass{Primary 17B67}
\keywords{Gelfand-Tsetlin modules,  Gelfand-Tsetlin basis, tableaux realization,  Yangian}
\maketitle

\section{Introduction} \label{sec-intro}

The representation theory of the Lie algebra $\g = \gl_n$ of all $n\times n$ complex matrices plays fundamental role in numerous areas of mathematics and physics.  A  natural category of  $\g$-modules is the category  of  Gelfand-Tsetlin modules - those  that have generalized eigenspace decompositions over a certain maximal commutative subalgebra (\emph{Gelfand-Tsetlin subalgebra}) $\Gamma$ of the universal enveloping algebra of $\g$. The concept of a Gelfand-Tsetlin module generalizes  the construction of  classical Gelfand-Tsetlin bases that have been   introduced in \cite{GG}, \cite{GT}. The general theory of Gelfand-Tsetlin modules was developed in  \cite{CE1}, \cite{CE2},  \cite{DFO2}, \cite{DFO3},  \cite{FGR1}, \cite{FGR2}, \cite{FGR4},   \cite{Gr1}, \cite{Gr2},   \cite{GS},  \cite{KW-1}, \cite{KW-2}, \cite{Maz1}, \cite{Maz2},   \cite{m:gtsb}, \cite{Ovs}, among  others.

Throughout the paper, for  $n\geq 2$, $T_n(\mathbb C) \simeq {\mathbb C}^{\frac{n(n+1)}{2}}$ will stand for the space consisting of the following \emph{Gelfand-Tsetlin tableau}: \\
  
  \
\begin{center}
\Stone{\mbox{ $l_{n1}$}}\Stone{\mbox{ $l_{n2}$}}\hspace{1cm} $\cdots$ \hspace{1cm} \Stone{\mbox{ $l_{n,n-1}$}}\Stone{\mbox{ $l_{nn}$}}\\[0.2pt]
\Stone{\mbox{ $l_{n-1,1}$}}\hspace{1.7cm} $\cdots$ \hspace{1.8cm} \Stone{\mbox{ $l_{n-1,n-1}$}}\\[0.3cm]
\hspace{0.2cm}$\cdots$ \hspace{0.8cm} $\cdots$ \hspace{0.8cm} $\cdots$\\[0.3cm]
\Stone{\mbox{ $l_{21}$}}\Stone{\mbox{ $l_{22}$}}\\[0.2pt]
\Stone{\mbox{ $l_{11}$}}\\
\medskip
\end{center}

 We will identify  $ T_{n}(\mathbb{C})$ with the set  ${\mathbb C}^{\frac{n(n+1)}{2}}$ in the following way:
 to  $$
L=(l_{n1},...,l_{nn}|l_{n-1,1},...,l_{n-1,n-1}| \cdots|l_{21}, l_{22}|l_{11})\in {\mathbb C}^{\frac{n(n+1)}{2}}
$$ 
 we associate a tableau   $T(L)$ as above.

For a fixed element 
$L=(l_{ij})_{j\leq i=1}^n$  in $T_n(\mathbb C)$ consider the set 
$$L_{\mathbb Z}:=L+ T_{n-1}(\mathbb Z)=\{L+M\; | \; M=(m_{ij})_{j\leq i=1}^n\in T_n(\mathbb Z), m_{nk}=0 \,, k=1, \ldots, n \}.$$
Henceforth, we define $V(L)$ to be the  complex vector space with basis the set $L_{\mathbb Z}$, i.e. $V(L) = \bigoplus_{L' \in L_{\mathbb Z}} {\mathbb C} T(L')$. Note that if $L', L'' \in L_{\mathbb Z}$, then $T(L'+L'') \neq T(L') + T(L'')$ in $V(L)$ (even if $L' + L'' \in L_{\mathbb Z}$).

An open problem which dates back to the work of Gelfand and Graev, \cite{GG}, is  to define a $\mathfrak{gl}_n$-module structure on a subspace of  $V(L)$  annihilated by some maximal  ideal  of the Gelfand-Tsetlin subalgebra.
Solutions to this problem are known 
in various cases, the most fundamental of which relies on  the  construction of Gelfand-Tsetlin bases of finite dimensional representations of $\g$.

Another important solution concerns the generic Gelfand-Tsetlin modules. More precisely, a  pair of entries $(l_{mi},l_{mj})$ of $L$ such that  $l_{mi}-l_{mj}\in \mathbb Z$ is called a {\it singular pair}. If $T(L)$ contains a singular pair in the $m$-th row for some $2\leq m\leq n-1$
then we call $T(L)$ a \emph{singular tableau} (and $L$ a \emph{singular} element in $T_n({\mathbb C})$). A tableau $T(L)$ is {\it $1$-singular} if it contains exactly one singular pair. If $T(L)$ is not singular it is called \emph{generic}. For a generic tableau $T(L)$, one can imitate the construction of Gelfand-Tsetlin bases of finite-dimensional representations and construct a $\mathfrak{gl}_n$-module structure on $V(L)$, see for example  \cite{DFO3}.

The  study of singular  modules $V(L)$ was initiated in \cite{FGR2}, where a module structure on $V(L)$ 
was introduced for any $1$-singular tableau $L$. The latter construction was generalized in \cite{FGR4} for singular tableaux $L$ with multiple singular pairs such that the difference of the entries of any two  distinct singular pairs is noniteger.  By taking irreducible quotients of $V(L)$ new irreducible  Gelfand-Tsetlin modules for $\gl_n$ were constructed. In particular,  understanding  the $1$-singular case helped us to  complete  classification of irreducible  Gelfand-Tsetlin $\mathfrak{gl}_3$-modules in \cite{FGR3}. Explicit basis for a class of irreducible  $1$-singular Gelfand-Tsetlin $\mathfrak{gl}_n$-modules was constructed in \cite{GR}.

We note that to each  $L \in T_n (\mathbb C)$ we associate the maximal ideal $\sm_L$ of the Gelfand-Tsetlin subalgebra $\Gamma$ generated by $c_{ij} - \gamma_{ij}(L)$, $1 \leq j \leq i \leq n$, where $c_{ij}$ are the generators of $\Gamma$, and $\gamma_{ij}$ are  symmetric polynomials defined in (\ref{def-gamma}).    
Furthermore, for every maximal ideal $\sm$ of $\Ga$  there exists an irreducible Gelfand-Tsetlin module $V$ such that $V_{\sm} \neq 0$ (see (\ref{gt-def}) for the definition of $V_{\sm}$) and the number of such non-isomorphic irreducible modules $V$ is finite, \cite{Ovs}.   In fact, for a generic $L$  there exists a unique up to isomorphism irreducible module $V$  such that  $V_{\sm_{L}}\neq 0$. On the other hand, if $L$ is $1$-singular, the number of such 
non-isomorphic irreducible Gelfand-Tsetlin modules is bounded by $2$ and it is $2$ for some $L$.   The following conjecture was stated in \cite{FGR2} and was known to be true for  $n=2$ and $n=3$.

\

\noindent {\bf Conjecture.}
 Let $L$ be a $1$-singular tableau and $\sm=\sm_{L}$. Then 
  any irreducible Gelfand-Tsetlin module $V$ with $V_{\sm_{L}}\neq 0$ is isomorphic to a subquotient of $V(L)$. 
  
  \

 The main purpose of the current paper is to prove this conjecture and, hence, to complete the classification of all irreducible $1$-singular Gelfand-Tsetlin modules of $\gl_n$,   $n \geq 2$.  The nontrivial case of the conjecture concerns non-critical modules, see Theorem \ref{theorem-fgr2}. We say that $L$ and $\sm=\sm_L$ are \emph{critical} if $L$ has equal entries in some of its rows.  A Gelfand-Tsetlin module $V$ such that $V_{\sm} \neq 0$ for some critical maximal ideal $\sm$ will be called  \emph{critical}.   
 
 To prove the conjecture we introduce a new technique, which is essentially different from the one we used in \cite{FGR2} and \cite{FGR4}. The main idea is to consider Drinfeld generators of the universal enveloping algebra of $\gl_n$. With the aid of the Drinfeld generators, for every non-critical maximal ideal $\sm$ of $\Gamma$  we introduce a new category of modules $\mathcal D_{\sm}$-mod that we call \emph{Drinfeld category}. Then we define a faithful functor from the category of Gelfand-Tsetlin modules having $\sm$ as a character  to  $\mathcal D_{\sm}$-mod. This functor
  preserves irreducibility and maps non-isomorphic modules to non-isomorphic.

\medskip

  \noindent {\bf Main Theorem.}  {\emph{Let $V$ be an irreducible non-critical Gelfand-Tsetlin $\gl_n$-module. Then the following hold.}
 \begin{itemize}
\item[(i)] {\it The module $V$ is tame, i.e. $\Gamma$ acts diagonally on $V$.}
\item[(ii)] {\it Let $\sm$ be a maximal ideal such that $V_{\sm} \neq 0$. If $W$ is an irreducible non-critical Gelfand-Tsetlin $\gl_n$-module such that $W_{\sm} \neq 0$, then $W$ is isomorphic to $V$.}  
\end{itemize}

\

The proof of Main Theorem involves a study of the irreducible modules in $\mathcal D_{\sm}$-mod. Main Theorem, in particular,  implies the conjecture above and completes the classification of the irreducible $1$-singular $\g$-modules. 

The organization of the paper is as follows. In Section 2 we collect some important results about singular Gelfand-Tsetlin modules. The definition of the Drinfeld category $\mathcal D_{\sm}$-mod and the functor from the category of Gelfand-Tsetlin modules having $\sm$ in its support to $\mathcal D_{\sm}$-mod  are included in Section 3. In Section 4 we study irreducible modules in  $\mathcal D_{\sm}$-mod. The main result is proven in Section 5.

\

\noindent{\bf Acknowledgements.}  V.F. is
supported in part by  CNPq grant (301320/2013-6) and by 
Fapesp grant (2014/09310-5). D.G is supported in part by Simons Collaboration Grant 358245.  Part of this work was done during the visit of the first author to the University of Sydney whose hospitality and support are greatly appreciated.
The authors would like to thank Alexander Molev for the numeropus helpful discussions and, in particular, for suggesting to use Drinfeld generators. The authors are also grateful to the referee for improving the exposition of the paper.

\section{Singular Gelfand-Tsetlin modules} \label{sec-singular-gt}
Consider  a chain of embeddings of Lie subalgebras 
$$\mathfrak{gl}_{1}\subset \mathfrak{gl}_{2}\subset \ldots \subset \mathfrak{gl}_n.$$
The choice of embeddings is not essential but for simplicity we chose embeddings of principal submatrices of $n\times n$-matrices.
Set $U_k=U(\mathfrak{gl}_k)$ to be the universal enveloping algebra of $\mathfrak{gl}_k$,  $k=1, \ldots, n$. 
 The {\it Gelfand-Tsetlin
subalgebra} $\Ga$ of $U=U_n$ associated with this chain of embeddings is  the subalgebra generated by the centers  of universal enveloping algebras of  $\gl_k$, $k=1, \ldots, n$. It is the  polynomial algebra in  the $\displaystyle \frac{n(n+1)}{2}$ generators
 $\{ c_{mk}\,|\,1\leq k\leq m\leq n \}$, where
\begin{equation}\label{equ_3}
c_{mk } \ = \ \displaystyle {\sum_{(i_1,\ldots,i_k)\in \{
1,\ldots,m \}^k}} E_{i_1 i_2}E_{i_2 i_3}\ldots E_{i_k i_1}, 
\end{equation}
 and $E_{ij}$ denote the $(i,j)$th elementary matrix, \cite{Zh}.

A $\mathfrak{gl}_n$-module $V$ is a \emph{Gelfand-Tsetlin  module} (with respect to $\Ga$) if
 \begin{equation} \label{gt-def}
V=\bigoplus_{\sm\in\Sp\Ga}V_{\sm},
\end{equation}
where $$V_{\sm}=\{v\in V\; | \; \sm^{k}v=0 \text{ for some }k\geq 0\},$$
and $\Sp\Ga$ denotes the set of maximal ideals of $\Ga$. The set of all $\sm$ in $\Sp\Ga$ for which $V_{\sm} \neq 0$ is called the \emph{Gelfand-Tsetlin support} or just  the \emph{support} of $V$. 

 To every element $L = (l_{ij})$ in $T_n ({\mathbb C})$  we  associate the maximal ideal $\sm_{L}$ of $\Ga$ generated by $c_{mk}-\gamma_{mk}(L)$ where 
\begin{equation} \label{def-gamma}
\gamma_{mk} (L): = \ \sum_{i=1}^m
(l_{mi}+m-1)^k \prod_{j\ne i} \left( 1 -
\frac{1}{l_{mi}- l_{mj}} \right).
\end{equation} 

Recall the definition of $L_{\mathbb Z}$ from the introduction. For a fixed maximal ideal $\sm_L$ of $\Ga$ the number of $L'\in L_{\mathbb Z}$  such that  $\sm_{L'} = \sm_L$  is finite, and all such $L'$ belong to a single orbit of the group
$G=S_1\times \ldots \times S_n$ (each $S_m$ acts on the $m$-th row of $L'$ by permutations).   

Recall the definition of a singular tableau from the introduction. We will say that $T(L)$ is a \emph{critical tableau} if the two entries of the singular pair of $L$ are equal. We call a maximal ideal $\sm = \sm_L$ of $\Ga$ \emph{generic}, \emph{singular}, or \emph{critical}, if $T(L)$ is generic, singular, or critical, respectively. Recall that the classification of simple Gelfand-Tsetlin modules having only generic tableaux in their support has been completed in \cite{DFO3}. The following theorem shows that the classification of simple $1$-singular Gelfand-Tsetlin modules $V$ can be reduced to those $V$ that have no critical ideals in their Gelfand-Tsetlin support. 

\begin{theorem} \label{theorem-fgr2}\cite{FGR2}  Let $L \in T_{n} (\mathbb C)$ be $1$-singular.  Then the following hold. \begin{itemize}
 \item[(i)]   $V(L)$ has a structure of a Gelfand-Tsetlin  $\mathfrak{gl}_n$-module.
  \item[(ii)] For any $L'\in L_{\mathbb Z}$ there exists an irreducible subquotient $V$ of $V(L)$ such that $V_{\sm_{L'}}\neq 0$.
  The number of all such irreducible subquotients  $V$ is bounded by $2$.
  \item[(iii)] If $L$ is a critical tableau then there exists a unique (up to an isomorphism)
 irreducible Gelfand-Tsetlin module   $V$ such that $V_{\sm_{L}}\neq 0$. 
  \item[(iv)] $V(L)\simeq V(L')$ if and only if there exists $\sigma\in G$ such that $L-\sigma(L')\in T_{n-1}(\mathbb Z)$, or equivalently, if $L$ and $L'$ are in the same orbit under the action of $G \ltimes T_{n-1} ({\mathbb Z})$ on $T_{n} (\mathbb C)$.
  \item[(v)] $\dim V(L)_{\sm}\leq 2$ for any maximal ideal $\sm$ of $\Ga$. If 
\end{itemize}
\end{theorem}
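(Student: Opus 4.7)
The plan is to construct the $\gl_n$-action on $V(L)$ by a limiting procedure from the generic Gelfand--Tsetlin formulas, and then to read off the structural statements (ii)--(v) from the resulting explicit realization. To start, I would introduce a one-parameter perturbation: pick $\epsilon \in T_n(\mathbb C)$ (vanishing on the $n$-th row) such that $L(t) := L + t\epsilon$ is generic for all small $t \neq 0$ and $L(0)=L$. For each such $t$ the classical formulas of \cite{DFO3} give a $\gl_n$-action on $V(L(t))$ whose matrix entries, after the tautological identification of the basis sets $L(t)_{\mathbb Z} \leftrightarrow L_{\mathbb Z}$, are rational functions of $t$. The only singularities come from the unique singular pair $(l_{mi},l_{mj})$ and are simple poles in the factor $(l_{mi}(t)-l_{mj}(t))^{-1}$.

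To absorb these poles, I would enlarge the tableau space: for each $L' \in L_{\mathbb Z}$ whose $m$-th row carries the trace of the singular pair, adjoin a second vector $DT(L')$ that represents $\partial_t T(L'(t))|_{t=0}$, and keep the single vector $T(L')$ in all other cases. Expanding the generic matrices as a Laurent series in $t$ and declaring the $t^{-1}$, $t^{0}$ and $t^{1}$ pieces to act on the pair $\{T(L'),DT(L')\}$ by the obvious $2\times 2$ block yields honest operators on this doubled space. The identities $[E_{ij},E_{kl}] = \delta_{jk}E_{il}-\delta_{li}E_{kj}$ hold for every $t \neq 0$, and because each matrix entry extends analytically to $t=0$ they persist at $t=0$; this gives a $\gl_n$-module whose underlying space is naturally identified with $V(L)$. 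By construction $\Gamma$ acts on each $2$-dimensional block by an upper-triangular Jordan form with eigenvalue $\gamma_{mk}(L')$, proving both (i) and (v), and simultaneously exhibiting each $V(L)_{\sm_{L'}}$ as either a genuine eigenspace or a length-two $\sm_{L'}$-nilpotent block.

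The remaining parts flow from this realization. For (ii), non-vanishing of $V(L)_{\sm_{L'}}$ gives some irreducible subquotient with $\sm_{L'}$ in its support, and the bound $2$ is immediate from (v). For (iii), when $L$ is critical the equality $l_{mi}=l_{mj}$ forces the would-be transposition in the stabilizer to act as the identity on rows, so the $2$-dimensional block at $\sm_{L}$ is actually a Jordan block with no non-trivial splitting, yielding a unique irreducible. For (iv), one direction is tautological: shifts by $T_{n-1}(\mathbb Z)$ merely relabel the basis and each $\sigma \in G$ permutes entries within rows, under which the Gelfand--Tsetlin formulas are symmetric. Conversely, if the $G \ltimes T_{n-1}(\mathbb Z)$-orbits of $L$ and $L'$ are disjoint, then by the description of $\sm_L$ in terms of the symmetric polynomials $\gamma_{mk}$ the supports of $V(L)$ and $V(L')$ are disjoint subsets of $\Sp\Gamma$, ruling out an isomorphism.

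The principal obstacle is the verification in the second paragraph that the limiting operators satisfy the $\gl_n$-relations on the doubled space. Although analytic continuation handles it in principle, one must check that the residues of the rational matrix entries pair up correctly between adjacent $2$-blocks so that no spurious term survives in each commutator $[E_{ij},E_{kl}]$; this is a genuine residue calculation tied to the combinatorics of which tableaux in $L_{\mathbb Z}$ inherit the singular pair under the basic transitions $l_{m,s}\mapsto l_{m,s}\pm 1$, and it is the technical heart of the construction in \cite{FGR2}.
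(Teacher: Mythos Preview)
The present paper does not prove this theorem; it is quoted verbatim from \cite{FGR2} and used as background.  Your outline is in fact a reasonable sketch of the strategy carried out in \cite{FGR2}: perturb the $1$-singular tableau to a generic one-parameter family $L(t)$, introduce ``derivative'' tableaux $DT(L')$ to absorb the simple poles coming from the unique singular denominator $(l_{mi}(t)-l_{mj}(t))^{-1}$, and verify the $\gl_n$-relations at $t=0$ by the residue/analytic-continuation argument you describe.  Parts (i), (v), (ii) and (iv) then follow essentially as you say.

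There is, however, a genuine gap in your treatment of (iii).  The assertion is that a critical $\sm_L$ admits a \emph{unique} irreducible Gelfand--Tsetlin module $V$ with $V_{\sm_L}\neq 0$, among \emph{all} such modules, not only among subquotients of $V(L)$.  Your Jordan-block argument shows at most that $V(L)$ contributes a single irreducible subquotient at $\sm_L$; it does not rule out an irreducible Gelfand--Tsetlin module supported at $\sm_L$ that fails to embed into any subquotient of $V(L)$.  Closing this gap requires an external input: a bound on the number of irreducibles over a given Gelfand--Tsetlin character, provided by the fiber theory of \cite{FO} (the same ingredient invoked later in this paper in the proof of Theorem~\ref{the-isolated}).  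One shows that the $\Gamma$-fiber at a $1$-singular character has dimension at most $2$, and in the critical case that $V(L)$ already accounts for the full fiber.  Without that step your argument for (iii) is incomplete.

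A smaller point: the ``doubled space'' you build is not literally $V(L)$ as defined in the introduction of this paper (one basis vector $T(L')$ per $L'\in L_{\mathbb Z}$).  The identification uses that for non-critical $L'$ the transposition of the singular pair produces a second tableau with the same $\sm_{L'}$, so the two models have the same graded dimensions; but the change of basis between $\{T(L'),T(\sigma L')\}$ and $\{T(L'),DT(L')\}$ is part of what must be written down, and at the critical tableaux it degenerates.  In \cite{FGR2} this is handled explicitly.
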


\section{Drinfeld category}

\subsection{Drinfeld generators}
We briefly recall the definition of the Yangian of $\gl_n$. For more details we refer the reader to the original work of Drinfeld, \cite{D}, as well as to the  book \cite{m:gtsb}. The Yangian $Y(\gl_n)$ 
of  $\gl_n$ is an associative algebra generated by
the elements $T_{ij}^{(r)}$ where $i,j = I, \ldots, n$ and $r = 1, 2, \ldots$ subject to the following
relations:
$$[ T^{(r)}_{ij} , T^{(s+1)}_{kl} ] - [ T^{(r+1)}_{ij} , T^{(s)}_{kl} ] = T^{(r)}_{kj }T^{(s)}_{il} - T^{(s)}_{kj} T^{(r)}_{il},\, T^{(0)}_{ij} = \delta_{ij}$$ 
$ r, s = 0, 1, 2, \ldots$. Note that we fix the relations to be the same as those in \cite{NT}. Introduce the formal Laurent series in $u^{-1}$,
$$T_{ij}(u) = T^{(0)}_{ij}u + T^{(1)}_{ij} + T^{(2)}_{ij} u^{-1} + T^{(3)}_{ij} u^{-2} + \ldots$$
and set $$
T(u) :=
\sum^n_{i,j=1}
E_{ij} \otimes T_{ij}(u).$$

Next we recall the definition of  quantum determinant.
Let $X(u) = (X_{ij}(u))_{i,j=1}^m$ be an
arbitrary matrix whose entries are formal Laurent series in $u^{-1}$ with coefficients
in $Y(\gl_n)$. The \emph{quantum determinant} of $X(u)$ is:
$$\qdet X(u) := \sum_{\sigma\in S_m} (-1)^{l(\sigma)}X_{1\sigma(1)}(u)X_{2\sigma(2})(u - 1)\ldots X_{m,\sigma(m)}(u - m + 1),$$
where  $l(\sigma)$ is the length of $\sigma$.

For each $m=1, \ldots, n$ set $A_m(u) = \qdet [T_{ij}(u)]^m_{i,j=1}$.
The coefficients of $u^{0}$, $u^{-1}$, $\ldots$ in the expansion of $A_m(u)$ are free
generators of the center of the algebra $Y(\gl_m)$ for $m=1, \ldots, n$, \cite{MNO}. Moreover, all coefficients of $A_1(u), \ldots, A_n(u)$ pairwise commute. 

For any $m = 1, \ldots, n - 1$ denote by $B_m(u)$, $C_m(u)$,  $D_m(u)$
 the quantum determinants of the submatrices of $T(u)$ generated respectively: by the rows $1, \ldots, m - 1,
m+1$ and the columns $1, \ldots, m$; by
 the rows $1, \ldots, m$ and the columns $1, \ldots, m - 1,  m + 1$;  by the rows $1, \ldots, , m- 1, m+1$ and the 
columns $1, \ldots, , m- 1, m+1$. The coefficients of $A_m(u), B_m(u)$, $C_m(u)$,  and $D_m(u)$
form an alternative set of generators for $Y(\gl_n)$, \cite{D}, called \emph{Drinfeld generators}. 
Note that our $B_m(u)$ and $C_m(u)$ correspond to $C_m(u)$ and $B_m(u)$, respectively  in \cite{NT}. 

In the following proposition we collect some basic properties of the Drinfeld generators.
%$Ai(u) = u(u ? 1) \ldots (u ? i + 1) a_i(u)$

\begin{proposition}[\cite{NT}, Proposition 1.2, Proposition 1.4] \label{prop-formulas}
The following commutation relations hold in $Y(\gl_n)$:
\begin{itemize}
\item[(i)] $[A_m(u), B_l(v)] = 0$ if $l\neq m$,
\item[(ii)] $[A_m(u), C_l(v)] = 0$ if $l\neq m$,
\item[(iii)] $[C_m(u), B_l(v)] = 0$ if $l\neq m$,
\item[(iv)] $[S_m(u), S_l(v)] = 0$ if $|l -m| \neq 1$, $S\in \{B, C\}$,
\item[(v)]  $(u - v)  [A_m(u), C_m(v)] = C_m(u)A_m(v) - C_m(v)A_m(u)$,
\item[(vi)] $(u - v)  [A_m(u), B_m(v)] =- B_m(u)A_m(v) + B_m(v)A_m(u)$,
\item[(vii)] $(u - v) [C_m(u), B_m(v)] = - D_m(u)A_m(v) + D_m(v)A_m(u)$,
\item[(viii)] $B_m(u)C_m(u - 1) = D_m(u)A_m(u - 1) - A_{m+1}(u)A_{m-1}(u - 1).$
\item[(ix)] $C_m(u-1)B_m(u) = D_m(u-1)A_m(u) - A_{m+1}(u)A_{m-1}(u - 1).$
\end{itemize}
\end{proposition}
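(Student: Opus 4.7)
The plan is to work throughout with quantum minors of the generating matrix $T(u)$ and to extract every identity from the RTT relation
$$R(u-v)\,T_1(u)T_2(v) = T_2(v)T_1(u)\,R(u-v), \qquad R(u) = I - P/u,$$
which is equivalent to the defining relations of $Y(\gl_n)$. Writing $t^{a_1\ldots a_p}_{b_1\ldots b_p}(u)$ for the quantum minor on the indicated rows and columns, the four Drinfeld series admit the compact descriptions
$A_m(u)=t^{1\ldots m}_{1\ldots m}(u)$, $B_m(u)=t^{1\ldots m-1,\,m+1}_{1\ldots m}(u)$, $C_m(u)=t^{1\ldots m}_{1\ldots m-1,\,m+1}(u)$, $D_m(u)=t^{1\ldots m-1,\,m+1}_{1\ldots m-1,\,m+1}(u)$. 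The first step is therefore to record the general commutation relation between two quantum minors, obtained by antisymmetrizing the RTT relation with the $p$-fold and $q$-fold antisymmetrizers acting on the auxiliary spaces. That produces a universal formula expressing $(u-v)[t^A_B(u), t^C_D(v)]$ as a sum over pairs $(i,j)$ of products of two minors in which one row/column index is swapped between $A,B$ and $C,D$.

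Once this tool is in place, the commuting relations (i)--(iv) follow from the classical centrality theorem: $A_m(u)=\qdet T^{(m)}(u)$ is central in the Hopf subalgebra $Y(\gl_m)\subset Y(\gl_n)$ generated by the $T_{ij}(u)$ with $i,j\le m$. For $l<m-1$ the series $B_l(v),C_l(v)$ live in $Y(\gl_{l+1})\subset Y(\gl_m)$, so (i)--(iii) are immediate. For $l>m$ the indices of $B_l(v),C_l(v)$ have no overlap of the critical kind with those of $A_m(u)$, and a direct inspection of the universal minor formula shows every term on the right-hand side vanishes; the same inspection yields (iv) for $|l-m|\ne 1$. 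The relations (v)--(vii) are the simplest non-commuting specializations of the same formula: only the swaps on the single ``extra'' index $m+1$ survive antisymmetrization, and the resulting two-term sum matches the claimed right-hand sides after rearrangement.

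The harder identities are (viii) and (ix), because they mix minors of three different sizes ($m{-}1$, $m$, $m{+}1$) and cannot be obtained by a single antisymmetrization. The natural tool is the \emph{quantum Sylvester identity} (Gelfand--Retakh; Molev \cite{m:gtsb}), which expresses a quantum minor of size $k$ in terms of the $(k-1)\times(k-1)$ minors and the ``pivot'' minor of size $k-2$. Applied with the pivot $A_{m-1}(u-1)$ to the matrix of $2\times 2$ minors built from rows $\{1,\ldots,m-1\}\cup\{i\}$ and columns $\{1,\ldots,m-1\}\cup\{j\}$ with $i,j\in\{m,m+1\}$, Sylvester gives precisely
$A_{m+1}(u)\,A_{m-1}(u-1)=A_m(u)D_m(u-1)-B_m\cdot C_m,$
with the correct ordering of the two off-diagonal factors producing either (viii) or (ix) depending on which side one places the pivot. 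The main obstacle I anticipate is bookkeeping the shifts $u\mapsto u-1$ correctly and verifying that the Sylvester identity is applied in a form compatible with the conventions of \cite{NT} used here; after that, (viii) and (ix) are two sides of the same computation.

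Finally, I would cross-check the result by specializing to small $n$ (say $n=2$ or $n=3$), where all the series can be expanded explicitly in a few low-order coefficients of the $T^{(r)}_{ij}$, and verify each of (i)--(ix) coefficient-by-coefficient; this also serves to pin down any sign or transposition convention differing between \cite{D}, \cite{NT}, and \cite{m:gtsb}.
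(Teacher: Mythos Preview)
The paper does not prove this proposition at all: it is quoted verbatim from \cite{NT}, Propositions~1.2 and~1.4, and used as a black box. So there is no ``paper's own proof'' to compare against beyond the citation.

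Your outline is the standard route taken in \cite{NT} and \cite{m:gtsb}: identify $A_m,B_m,C_m,D_m$ as quantum minors, derive the general minor commutation formula from RTT, deduce (i)--(vii) by specialization (with centrality of $A_m(u)$ in $Y(\gl_m)$ handling most of the commuting cases), and obtain (viii)--(ix) from the quantum Sylvester identity. That is exactly how Nazarov--Tarasov and Molev argue, so your plan matches the source the paper relies on. Two small points of care: in your treatment of (i)--(ii) you wrote ``for $l<m-1$'', but you need $l\le m-1$ (the case $l=m-1$ still has $B_{m-1},C_{m-1}\in Y(\gl_m)$, so centrality applies); and in (iv) you should also note the case $l=m$, which follows from the minor commutation formula applied to two minors with identical row and column sets. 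With those adjustments your sketch is sound, though as written it remains a plan rather than a complete verification---the bookkeeping you flag for (viii)--(ix) (shifts in $u$, ordering of factors) is real and is where the actual work lies.
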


Now, following the construction of \cite{NT}, we define generators of $U(\gl_n)$ coming from  $A_m(u)$, $B_m(u)$, $C_m(u)$, $D_m(u)$, as follows. The algebra $Y(\gl_n)$ contains the universal enveloping $U(\gl_n)$ as a subalgebra  through the embedding $E_{ij} \mapsto T_{ji}^{(1)}$. Also, there exists a surjective homomorphism $\phi: Y(\gl_n)\rightarrow U(\gl_n)$ defined  by
$$T_{ij}(u)\mapsto \delta_{ij}u+ E_{ji}.$$ 
We denote the images of $A_m(u)$, $m=1, \ldots, n$ and $B_m(u)$, $C_m(u)$, $D_m(u)$, $m=1, \ldots, n-1$, under $\phi$, by 
 $a_m(u)$,  $b_m(u)$, $c_m(u)$ and $d_m(u)$, respectively. These images are polynomials of degree $m$, $m-1$, $m-1$, $m$, respectively, and their coefficients generate $U(\gl_n)$. We call these coefficients the \emph{Drinfeld generators of} $U(\gl_n)$. For instance, $E_{m,m+1}$ is the leading coefficient of $b_m(u)$, while $E_{m+1,m}$ is the leading coefficient of $c_m(u)$.
The coefficients of  $a_m (u)$, $m=1, \ldots, n$, generate the Gelfand-Tsetlin subalgebra $\Gamma$.  Clearly, the Drinfeld generators of $U(\gl_n)$ satisfy the relations listed in Proposition \ref{prop-formulas}. 

% After  differentiating  by $v$ the corresponding identities to Proposition \ref{prop-formulas}(v)(vi) we obtain the following.

% \begin{corollary}\label{relation-derivative}
% \begin{itemize}
% \item[(i)] $(u-v)[a_m(u)c_m(u)'-c_m(v)'a_m(u)] - a_m(u)c_m(v) +\\ c_m(v)a_m(u) = c_m(u)a_m(v)' - c_m(v)'a_m(u)$;
% \item[(ii)] $(u-v)[a_m(u)b_m(u)'-b_m(v)'a_m(u)] - a_m(u)b_m(v) + b_m(v)a_m(u) =\\- b_m(u)a_m(v)' + b_m(v)'a_m(u)$.
% \end{itemize}
% \end{corollary}

\subsection{Interpolation} \label{subsec-interpol}

For any $m=1, \ldots, n-1$ and any tuple $(l_{m1}, \ldots, l_{mm})$ of distinct complex numbers we can recover the Drinfeld generators  of  $U(\gl_n)$ knowing their values at  $-l_{mi}$, $i=1, \ldots, m$ using the Lagrange interpolation formula.  For instance, 
$$b(u)=\sum_{i=1}^{n-1}\frac{(u+l_{m1})\ldots \widehat{(u+l_{mi})} \ldots (u+l_{mm})}{(l_{mi}-l_{m1})\ldots \widehat{(l_{mi}-l_{mi})} \ldots (l_{mi}-l_{mm})}b(-l_{mi})$$ 
(as usual, $\widehat{t}$ indicates that the corresponding term $t$ is missing in the product). For a general tuple $(l_{m1}, \ldots, l_{mm})$  one can use the Lagrange-Newton interpolation formula. We note though that in this paper we  restrict our attention to non-critical tableaux $L$ (distinct $l_{m1},...,l_{mm}$).

Let now $V$ be a Gelfand-Tsetlin module such that $V$ has no critical  maximal ideals in its Gelfand-Tsetlin support. In what follows we describe how we can define the action of the Drinfeld generators on $V$ based on the interpolation formulas written above.  Let $\sm$ be in the Gelfand-Tsetlin support of $V$, i.e. $\sm$ is a maximal ideal in  $\Ga$ such that $V_{\sm}\neq 0$. Choose an arbitrary tableau $T(L)$ such that $\sm=\sm_L$ and whose $m$th row is $(l_{m1}, \ldots, l_{mm})$. The action of $a_m(u)$, $m=1, \ldots, n$, on $V_{\sm}$ is determined by $\sm$ only. More precisely, there is a polynomial $\alpha_m(u, \sm)$ of $u$ depending on $\sm$ such that  $a_m(u) - \alpha_m(u, \sm)\mbox{Id}$ acts finitely on $V_{\sm}$.
 To define the action of the Drinfeld generators $b_m(u)$ and $c_m(u)$ on $V_{\sm}$  it is sufficient to define 
the action of the operators $b_m(-l_{mi})$ and  $c_m(-l_{mi})$, $i=1, \ldots, m$. For the latter operators we have the following important properties. In what follows, by $L-\delta_{mi}$ (respectively, $L +\delta_{mi}$) we denote the element of $L_{\mathbb Z}$ obtained from $L$ after substracting $1$ from (respectively, adding $1$ to) $l_{mi}$.

\begin{proposition} \label{prop-der-crit} For any  $L = (l_{ij})$ and $\sm = \sm_L$ the following hold. 
\begin{itemize}
\item[(i)] $c_m(-l_{mi})V_{\sm}\subset V_{\sm'}$, where $\sm'=\sm_{L-\delta_{mi}}$;
\item[(ii)] $b_m(-l_{mi})V_{\sm}\subset V_{\sm''}$, where $\sm''=\sm_{L+\delta_{mi}}$.
\end{itemize}
\end{proposition}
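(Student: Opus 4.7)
The plan is to derive the statement from the commutation relations of Proposition \ref{prop-formulas} together with the standard identification $\alpha_p(u, \sm_L) = \prod_{j=1}^p(u + l_{pj})$ of the polynomial scalar by which $a_p(u)$ acts on $V_{\sm_L}$ modulo nilpotent operators. I treat part (i); part (ii) is entirely symmetric, replacing Proposition \ref{prop-formulas}(v) by (vi) and noting that $b_m(-l_{mi})$ shifts the entry $l_{mi}$ upward by one.

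For rows $p \neq m$ I invoke Proposition \ref{prop-formulas}(ii): $[a_p(u), c_m(-l_{mi})] = 0$, so $c_m(-l_{mi})$ preserves the generalized eigenspace structure coming from the coefficients of $a_p(u)$. Since the $p$-th rows of $L$ and $L' := L - \delta_{mi}$ coincide, $\alpha_p(u, \sm_{L'}) = \alpha_p(u, \sm_L)$ and the $\sm_{L'}$-condition on row $p$ is automatic. The substance of the proof lies in row $m$.

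Specializing Proposition \ref{prop-formulas}(v) at $v = -l_{mi}$ (both sides vanish at $u = -l_{mi}$, so the relation is a genuine polynomial identity in $u$) and using $(u + l_{mi})\alpha_m(u, \sm_{L'}) = (u + l_{mi} - 1)\alpha_m(u, \sm_L)$ together with $\alpha_m(-l_{mi}, \sm_L) = 0$, I rearrange to the master identity
\begin{equation*}
(u + l_{mi})(a_m(u) - \alpha_m(u, \sm_{L'}))\, c_m(-l_{mi}) \;=\; (u + l_{mi} - 1)\, c_m(-l_{mi})(a_m(u) - \alpha_m(u, \sm_L)) + c_m(u)\,(a_m(-l_{mi}) - \alpha_m(-l_{mi}, \sm_L)).
\end{equation*}
Applied to a proper eigenvector $w \in V_{\sm_L}$, the right-hand side vanishes, so $c_m(-l_{mi}) w$ is a proper eigenvector of $a_m(u)$ with eigenvalue $\alpha_m(u, \sm_{L'})$, hence lies in $V_{\sm_{L'}}$.

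The extension to generalized eigenvectors goes by induction on the nilpotency depth $k$ with $V_{\sm_L}^{(k)} := \{v : \sm_L^k v = 0\}$. For $w \in V_{\sm_L}^{(k)}$ the master identity expresses $(a_m(u) - \alpha_m(u, \sm_{L'})) c_m(-l_{mi}) w$ in terms of $c_m(-l_{mi})$ and $c_m(u)$ applied to vectors whose coefficients lie in $V_{\sm_L}^{(k-1)}$. The main obstacle is the $c_m(u)$-term: expanding it via the Lagrange interpolation of subsection \ref{subsec-interpol} as $\sum_{j=1}^m p_j(u)\, c_m(-l_{mj})$, the induction hypothesis places each $j$-th summand in a distinct shifted eigenspace $V_{\sm_{L - \delta_{mj}}}$, not only in the desired $V_{\sm_{L'}}$. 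I resolve this by projecting both sides of the master identity onto the $\Gamma$-generalized eigenspace for $\sm_{L'}$: the non-criticality of $L$ (pairwise distinctness of the $l_{mj}$, hence of the characters $\alpha_m(u, \sm_{L - \delta_{mj}})$) forces the $j \neq i$ contributions to project to zero. What survives suffices to show $(a_m(u) - \alpha_m(u, \sm_{L'}))^N c_m(-l_{mi}) w = 0$ coefficient-wise for $N$ large enough, closing the induction and proving (i).
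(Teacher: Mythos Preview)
The paper states Proposition~\ref{prop-der-crit} without proof, so there is no argument to compare against directly; the result is treated as standard (it is the well-known fact, going back to Nazarov--Tarasov, that the evaluated Drinfeld generators act as shift operators on Gelfand--Tsetlin characters). Your approach via the commutation relation Proposition~\ref{prop-formulas}(v) is the natural one, and your master identity and the treatment of proper eigenvectors are correct. Since the paper ultimately applies the proposition only to tame modules (the Drinfeld quiver and the functor $F_{NC}$ are defined on $\mathcal{NCT}_{\sm}$, and Proposition~\ref{prop--tame} is proved independently using Corollary~\ref{relations}(vi)--(vii)), the proper-eigenvector case is in fact all that is required for the paper's results.

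That said, your inductive extension to generalized eigenvectors has a genuine gap. Projecting the master identity onto $V_{\sm_{L'}}$ only yields information about the $V_{\sm_{L'}}$-component of $c_m(-l_{mi})w$; it says nothing about the components lying in $V_{\sm_{L-\delta_{mj}}}$ for $j\neq i$. Concretely, write $c_m(-l_{mi})w=\sum_{\sn}v_{\sn}$ and project the identity onto $V_{\sm_{L-\delta_{mj}}}$ for some fixed $j\neq i$. The first term on the right (involving $c_m(-l_{mi})$) projects to zero by induction, but the $c_m(u)$-term contributes $p_j(u)\,c_m(-l_{mj})\bigl(a_m(-l_{mi})w\bigr)$, which is generically nonzero. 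On the left you obtain $(u+l_{mi})\bigl(a_m(u)-\alpha_m(u,\sm_{L'})\bigr)v_{\sm_{L-\delta_{mj}}}$, and since $a_m(u)-\alpha_m(u,\sm_{L'})$ acts \emph{invertibly} on $V_{\sm_{L-\delta_{mj}}}$ (the characters differ), this equation is perfectly consistent with $v_{\sm_{L-\delta_{mj}}}\neq 0$. Your final sentence asserts that ``what survives'' after projection onto $V_{\sm_{L'}}$ suffices to force $(a_m(u)-\alpha_m(u,\sm_{L'}))^{N}c_m(-l_{mi})w=0$, but this conclusion concerns the full vector $c_m(-l_{mi})w$, not merely its $\sm_{L'}$-component, and the projection you performed cannot deliver it. Closing the induction requires an additional argument controlling the $j\neq i$ components --- for instance, running the induction simultaneously over all $j$ and exploiting a triangularity of the resulting system, or iterating the relation and tracking powers of $\sm_L$ more carefully.
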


On the other hand, Proposition \ref{prop-formulas} implies  other useful properties for the action of $c_m(-l_{mi})$ and $b_m(-l_{mi})$ on any Gelfand-Tsetlin module listed below.

\begin{corollary}\label{relations} Let  $L=(l_{ij})$  and $\sm=\sm_L$. Let $V$ be a Gelfand-Tsetlin module with $V_{\sm}\neq 0$ and $v\in V_{\sm}$ a nonzero element such that $\sm v=0$. Then the following hold.
\begin{itemize}
\item[(i)]  $[c_m(x), b_l(y)] V_{\sm}= 0$ for all $x$, $y$ if $l\neq m$;
\item[(ii)] $[s_m(x), s_l(y)] V_{\sm}= 0$ or all  $x$, $y$, $s\in \{b, c\}$ if $|l -m| \neq 1$;
\item[(iii)] $[c_m(-l_{mi}), b_m(-l_{mj})]v=0$ if $l_{mi}\neq l_{mj}$;
\item[(iv)] $b_m(-l_{mi}+1)c_m(-l_{mi}) v= - a_{m+1}(-l_{mi}+1)a_{m-1}(-l_{mi})v$;
\item[(v)] $
c_m(-l_{mi}-1)b_m(-l_{mi})v= - a_{m+1}(-l_{mi})a_{m-1}(-l_{mi}-1)v$;
\item[(vi)]  $(x - y)  [a_m(x), c_m(y)] w= c_m(x)a_m(y)w - c_m(y)a_m(x)w$ for any $w\in V_{\sm}$;
\item[(vii)] $(x - y)  [a_m(x), b_m(y)] w=- b_m(x)a_m(y) w+ b_m(y)a_m(x)w$ for any $w\in V_{\sm}$. 
\end{itemize}
\end{corollary}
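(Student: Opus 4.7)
The plan is to obtain each of the seven identities by specializing the corresponding Yangian relation from Proposition \ref{prop-formulas} (applied in $U(\gl_n)$ via $\phi$) at appropriate values of the spectral parameters. The one substantive input is the eigenvalue identity
\[
a_m(u)\,v \;=\; \prod_{i=1}^{m}(u+l_{mi})\,v \qquad\text{for all } v\in V_{\sm}\text{ with }\sm v=0,\ \sm=\sm_L,
\]
in particular $a_m(-l_{mi})\,v=0$ for every $i$. This follows from the fact that the coefficients of $a_m(u)$ generate $\Gamma$ (so act on $v$ as the scalars read off from $\sm_L$), combined with the standard Capelli-type evaluation of the quantum determinant on Gelfand-Tsetlin vectors; the same convention dictates the choice of interpolation nodes in Section \ref{subsec-interpol}.

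Parts (i), (ii), (vi), and (vii) are then immediate: each is the literal image under $\phi$ of the corresponding relation in Proposition \ref{prop-formulas}(iii), (iv), (v), (vi), applied to an arbitrary vector of $V_{\sm}$, with no specialization needed. For (iii), I would specialize the $\phi$-image of Proposition \ref{prop-formulas}(vii) at $x=-l_{mi}$, $y=-l_{mj}$: the left-hand side becomes $(l_{mj}-l_{mi})\bigl[c_m(-l_{mi}),b_m(-l_{mj})\bigr]$, and the nonzero scalar $l_{mj}-l_{mi}$ can be divided out, while the right-hand side applied to $v$ reads $-d_m(-l_{mi})a_m(-l_{mj})v+d_m(-l_{mj})a_m(-l_{mi})v$, which vanishes by the eigenvalue identity. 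For (iv) and (v), I would specialize the $\phi$-images of Proposition \ref{prop-formulas}(viii) at $u=-l_{mi}+1$ and of (ix) at $u=-l_{mi}$ respectively; in each case the summand $d_m(\cdot)a_m(\cdot)v$ is killed because one of the arguments of $a_m$ is exactly $-l_{mi}$, and what remains is the asserted formula.

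The only real obstacle is making the eigenvalue identity $a_m(-l_{mi})v=0$ rigorous---this is essentially the Capelli-type eigenvalue formula for the quantum determinant on $v$---and then checking at each specialization that it is the intended $a_m$-factor whose argument lands at $-l_{mi}$ (so that the identities in (iv) and (v) carry actual content rather than being trivial equalities of zeros). Once this bookkeeping is handled, every item of the corollary reduces to a one-step specialization of a Yangian relation.
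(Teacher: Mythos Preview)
Your proposal is correct and follows exactly the route the paper intends: the corollary is stated without proof as a direct consequence of Proposition~\ref{prop-formulas}, and your item-by-item specialization (together with the vanishing $a_m(-l_{mi})v=0$ for $\Gamma$-eigenvectors) is precisely how one fills in the details. The eigenvalue identity you flag as the ``one substantive input'' is indeed the only point requiring external input, and it is implicit in the paper's discussion of interpolation in \S\ref{subsec-interpol}.
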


Recall that a Gelfand-Tsetlin module $V$ is \emph{tame}, if the action of $\Gamma$ on $V$ is diagonalizable.  In \cite{FGR2} it is proven that an irreducible  $1$-singular Gelfand-Tsetlin module that has no critical maximal ideals  in its Gelfand-Tsetlin support is tame. We now generalize this to any singular Gelfand-Tsetlin module.

\begin{proposition}\label{prop--tame}
If $V$ is an irreducible Gelfand-Tsetlin module such that $V$ has no critical  maximal ideals in its Gelfand-Tsetlin support then $V$ is a tame module.
\end{proposition}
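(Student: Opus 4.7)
The plan is to identify a non-zero $U(\gl_n)$-submodule $W \subset V$ on which $\Ga$ already acts diagonally, and then use irreducibility of $V$ to force $W = V$. Concretely, set
\[
W \ := \ \bigoplus_{\sm \in \Sp \Ga} \bigl\{\, v \in V_{\sm} \ : \ \sm \cdot v = 0 \,\bigr\}.
\]
Since $\sm$ acts locally nilpotently on each generalized eigenspace $V_\sm$, every non-zero $V_\sm$ contains a genuine $\sm$-eigenvector, so $W \neq 0$.

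To show $W$ is a submodule I would exploit that $U(\gl_n)$ is generated by the coefficients of the Drinfeld series $a_m(u), b_m(u), c_m(u)$. Stability under the coefficients of $a_m(u)$ (which generate $\Ga$) is automatic. Fix $v \in W \cap V_\sm$ and pick any tableau $L$ with $\sm_L = \sm$. Non-criticality of $\sm$ ensures the row-$m$ entries $l_{m1}, \ldots, l_{mm}$ are pairwise distinct, so the Lagrange interpolation formula of Subsection \ref{subsec-interpol} reduces stability of $W$ under $b_m(u)$ and $c_m(u)$ (for arbitrary $u$) to stability under the finitely many operators $b_m(-l_{mi})$ and $c_m(-l_{mi})$.

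Set $w := b_m(-l_{mi}) v$ (the case of $c_m(-l_{mi}) v$ is symmetric via Corollary \ref{relations}(vi)). By Proposition \ref{prop-der-crit}, $w \in V_{\sm''}$ for $\sm'' = \sm_{L+\delta_{mi}}$; one must show $\sm'' \cdot w = 0$. By uniqueness of the generalized $a_k(u)$-eigenvalue on $V_{\sm''}$, it suffices to verify that $w$ is a genuine eigenvector of every $a_k(u)$. For $k \neq m$ the commutativity $[a_k(u), b_m(-l_{mi})] = 0$ from Corollary \ref{relations}(i) gives $a_k(u) w = \alpha_k(u,\sm) w$, and $\alpha_k(u,\sm) = \alpha_k(u,\sm'')$ because $L$ and $L + \delta_{mi}$ agree in row $k$. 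For $k = m$, I would apply Corollary \ref{relations}(vii) with $y = -l_{mi}$ to $v$; combined with the vanishing $\alpha_m(-l_{mi},\sm) = 0$ (the natural reason $-l_{mi}$ are the interpolation nodes, and implicit already in Corollary \ref{relations}(iv)--(v)), a short rearrangement should yield
\[
a_m(x)\, w \ = \ \frac{x + l_{mi} + 1}{x + l_{mi}}\, \alpha_m(x, \sm)\, w,
\]
a genuine polynomial identity because the factor $(x + l_{mi})$ in $\alpha_m(x, \sm)$ cancels the apparent pole.

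The main obstacle I anticipate is precisely this $k = m$ step: the relation in Corollary \ref{relations}(vii) produces a spurious $b_m(x) v$ term, and only the vanishing of $\alpha_m(-l_{mi},\sm)$ kills it to leave an eigenvalue that is actually a polynomial in $x$. Once this is in hand, Lagrange interpolation lifts stability at the nodes $-l_{mi}$ to stability under all $b_m(u)$ and $c_m(u)$, the Drinfeld generators deliver $U(\gl_n)$-stability of $W$, and irreducibility of $V$ forces $W = V$, as desired.
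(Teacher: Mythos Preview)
Your proof is correct and follows essentially the same route as the paper's: define the subspace of genuine $\Gamma$-eigenvectors, use interpolation plus the Drinfeld relations (Corollary~\ref{relations}(vi)--(vii)) to show it is stable under $b_m(-l_{mi})$ and $c_m(-l_{mi})$, and invoke irreducibility. Your write-up is in fact more explicit about the $k=m$ eigenvalue computation than the paper's; the only slip is that the commutation $[a_k(u),b_m(v)]=0$ for $k\neq m$ comes from Proposition~\ref{prop-formulas}(i), not Corollary~\ref{relations}(i).
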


\begin{proof}
 Fix $\sm$ in the Gelfand-Tsetlin support of $V$ and choose any nonzero $v\in V_{\sm}$ 
 such that $\sm v=0$. Let $\sm=\sm_L$ and $L=(l_{ij})$.
Applying Corollary \ref{relations}(vi)-(vii) we obtain that $\sm_{L-\delta_{mi}}c_m(-l_{mi})v=0$ and $\sm_{L+\delta_{mi}}b_m(-l_{mi})v=0$, that is, $c_m(-l_{mi})v$ and $b_m(-l_{mi})v$ are eigenvectors of $\Gamma$.  Since the action of $U(\gl_n)$ on $V$ is determined by the actions of $a_m(-l_{mi})$, $b_m(-l_{mi})$, $c_m(-l_{mi})$ on $V$ (for all possible values of $m$ and $i$ and all maximal ideals in the Gelfand-Tsetlin support of $V$), 
 the subspace of $V$ consisting of eigenvectors of $\Gamma$ is $U(\gl_n)$-invariant. This completes the proof.\end{proof}

\subsection{Drinfeld quiver and Drinfeld category} 
Let $\sm$ be a non-critical ideal in $\Gamma$ and let $L$ be such that $\sm = \sm_L$. Let also ${\mathcal H}_{i_1,i_2}^m = \{ (z_{ij}) \in T_{n-1} (\mathbb C) \; | \; z_{mi_1} = z_{mi_2} \}$ be the critical hyperplane in $T_{n-1} ({\mathbb C})$ corresponding to the pair $(z_{mi_1}, z_{mi_2})$. Let $\mathcal{NC}_{\mathbb Z} (L)$ be the connected component of $L_{\mathbb Z} \setminus \bigcup_{i_1,i_2,m} {\mathcal H}_{i_1,i_2}^m $ containing $L$ (we call a subset $A$ of $L_{\mathbb Z}$ connected if $A = A' \cap L_{\mathbb Z}$ for some some connected subset $A'$ of $T_{n-1} (\mathbb C)$).
Then let $\mathcal{NC}_{\sm}$  be the set of ideals $\sm_{L'}$ in $\Gamma$ such that $L' \in  \mathcal{NC}_{\mathbb Z} (L)$ (we identify maximal ideals that correspond to the same orbit of the group $S_1 \times \ldots \times S_n$). One easily see that $\mathcal{NC}_{\sm}$ is independent on the choice of $L$ in a connected component of $L_{\mathbb Z} \setminus \bigcup_{i_1,i_2,m} {\mathcal H}_{i_1,i_2}^m$.

We define the (non-critical) Drinfeld quiver with relations $\mathcal D_{\sm}$ as follows. The set of vertices of $\mathcal D_{\sm}$ is  $\mathcal{NC}_{\sm}$. Recall the definition of $\sm_{L' \pm \delta_{mi}}$ prior to Proposition \ref{prop-der-crit}. If  $L'=(l'_{ij})$, then   $\omega_{L'}\in \mathcal{NC}_{\sm}$ is the start point of edges  $a_k(-l'_{mi})$, $b_m(-l'_{mi})$, $c_m(-l'_{mi})$,  $i=1, \ldots, m$; $m=1, \ldots, n-1$.  The end points of these edges are determined by the properties in Proposition \ref{prop-der-crit}. More precisely, the end points of the edge $b_m(-l'_{mi})$ and $c_m(-l'_{mi})$ are $\sm_{L' + \delta_{mi}}$ and $\sm_{L' - \delta_{mi}}$, respectively. Furthermore, the end point of  $a_k(-l'_{mi})$ coincides with its start point. We now describe the relations of ${\mathcal D}_{\sm}$. 
%Recall the definition of $\alpha_k(u, \sm)$.
We first include those relations obtained from  the relations 
 of Corollary \ref{relations} after evaluating $u$ at $-l'_{mi}$. In addition, we add the relations $a_k(-l_{mi}') e_{\sm_{L'}} = \alpha_k(-l_{mi}', \sm) e_{\sm_{L'}} $, where $e_{\sm_{L'}} $ is the idempotent of $\mathcal D_{\sm}$ corresponding to the vertex $\sm_{L'}$ (recall the definition of  $\alpha_k(u, \sm)$ in \S \ref{subsec-interpol}). In particular, all $a_k(-l_{mi}')$ pairwise commute. Due to the Lagrange interpolation formula it is sufficient to consider only the morphisms $a_m(-l'_{mi})$ at the vertex 
 $\sm_{L'}$.

 \begin{example}
Let $n=3$ and $\tilde{L}$ be a $1$-singular critical tableau, i.e. $\tilde{l}_{21} = \tilde{l}_{22}$. 
Consider the critical plane ${\mathcal H} = {\mathcal H}_{1,2}^2$ in $T_2(\mathbb C) = {\mathbb C}^3$. Let $L \in \tilde{L}_{\mathbb Z}$ be a non-critical tableau and let $\sm = \sm_{L}$. The hyperplane ${\mathcal H}$ separates the set $L_{\mathbb Z} =  \tilde{L}_{\mathbb Z}$ into two  (discrete) half-spaces and $\mathcal{NC}_{\mathbb Z}(L)$ is the one containing $L$. Then the set of vertices of  ${\mathcal D}_{\sm}$  is 
$\{\sm_{L'}  \; | \; L'  \in \mathcal{NC}_{\mathbb Z}(L) \}$. Below we present a partial picture of ${\mathcal D}_{\sm}$ involving the edges $b_2(-l_{2,i}')$ and $c_2(-l_{2,i}')$, $i=1,2$. Note that the complete picture is 3-dimensional half lattice (avoiding the critical plane $\mathcal H$) and involves also the edges  $a_k(-l_{mi}')$, $b_1(-l_{1,1}')$,  $c_1(-l_{1,1}')$.
\end{example}
 $$
\xymatrixcolsep{4pc} \xymatrixrowsep{3pc} \xymatrix{ \sm_{L'- \delta_{2,1} + \delta_{2,2}}  \ar@<0.5ex>[d]^{c_2 (-l_{2,2}'-1)}    \ar@<0.5ex>[r]^{b_2 (-l_{2,1}'+1)} 
& \sm_{L' + \delta_{2,2}}   \ar@<0.5ex>[d]^{c_2 (-l_{2,2}'-1)}  \ar@<0.5ex>[l]^{c_2 (-l_{2,1}')}  \ar@<0.5ex>[r]^{b_2 (-l_{2,1}')} 
& \sm_{L'+ \delta_{2,1} + \delta_{2,2} }   \ar@<0.5ex>[d]^{c_2 (-l_{2,2}'-1)} \ar@<0.5ex>[l]^{c_2 (-l_{2,1}'-1)}  \\  \sm_{L'-\delta_{2,1}}  \ar@<0.5ex>[r]^{b_2 (-l_{2,1}'+1)}  \ar@<0.5ex>[u]^{b_2 (-l_{2,2}')}  & \sm_{L'}   \ar@<0.5ex>[l]^{c_2 (-l_{2,1}')}  \ar@<0.5ex>[r]^{b_2 (-l_{2,1}')}  \ar@<0.5ex>[u]^{b_2 (-l_{2,2}')} &  \sm_{L'+\delta_{2,1} }    \ar@<0.5ex>[l]^{c_2 (-l_{2,1}'-1)}  \ar@<0.5ex>[u]^{b_2 (-l_{2,2}')} 
}
$$
 
 \medskip
 
  Recall that the purpose of the paper is to classify all simple $1$-singular Gelfand-Tsetlin modules $V$. In the case when $V$ has a 2-dimensional Gelfand-Tsetlin subspace $V_{\sm}$ for some $\sm$, then $V$ is a subquotient of the module $V(L)$ for some $L$, \cite{FGR2}. Thus, to complete our classification of simple $1$-singular Gelfand-Tsetlin modules we  need only  to consider  tame modules only.
 
We fix again a non-critical ideal $\sm$ in $\Ga$. Denote by $\mathcal D_{\sm}$-mod the category of representations of the quiver with relations $\mathcal D_{\sm}$. We call $\mathcal D_{\sm}$-mod the \emph{Drinfeld category} associated to $\sm$. If $V\in  \mathcal D_{\sm}$-mod  the vector space associated to the vertex  $\omega \in \mathcal {NC}_{\sm}$ will be   denoted by $V_{\omega}$. 
 If $V\in  \mathcal D_{\sm}$-mod then the \emph{Gelfand-Tsetlin support} of $V$ is the set of all vertices $\omega \in \mathcal {NC}_{\sm}$ for which $V_{\omega} \neq 0$.

Let $\mathcal{NCGT}_{\sm}$ be the full subcategory of the category of all Gelfand-Tsetlin modules consisting of  Gelfand-Tsetlin modules
whose Gelfand-Tsetlin support is a subset of $\mathcal {NC}_{\sm}$. In particular, the modules in  $\mathcal{NCGT}_{\sm}$ 
 have no critical tableau in their support, i.e.  are non-critical.
Recall from Proposition \ref{prop--tame} that any irreducible module in $\mathcal{NCGT}_{\sm}$ is tame. Denote by $\mathcal{NCT}_{\sm}$ 
 the subcategory of $\mathcal{NCGT}_{\sm}$ consisting of tame modules.

By the definition of $\mathcal D_{\sm}$, any module in $\mathcal {NCT}_{\sm}$ can be naturally viewed as a module over $\mathcal D_{\sm}$. 
As a result we have a  functor $$F_{NC}: \mathcal {NCT}_{\sm}\rightarrow \mathcal D_{\sm}\rm{-mod}.$$   
  In fact, $ \mathcal {NCT}_{\sm}$ is equivalent to a quotient category of $\mathcal D_{\sm}$-mod.

The following proposition is a direct consequence of the definitions of $\mathcal {NCT}_{\sm}$ and $\mathcal{D}_{\sm}$ and Corollary \ref{relations}.

\begin{proposition}\label{prop-categories}
The functor $F_{NC}$ is faithful and maps irreducible modules to irreducible. Moreover, the images of non-isomorphic modules are non-isomorphic.
\end{proposition}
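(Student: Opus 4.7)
The plan is to verify the three claims by unpacking the definition of $F_{NC}$ against two already established facts: (a) the Drinfeld generators $a_m(u),b_m(u),c_m(u)$ generate $U(\gl_n)$ as an algebra (recalled in Section 3.1), and (b) on any tame module in $\mathcal{NCT}_{\sm}$, the actions of $a_k(-l'_{mi})$, $b_m(-l'_{mi})$, $c_m(-l'_{mi})$ on each weight space $V_{\sm_{L'}}$ (with $L' = (l'_{ij})$) determine the full action of the series $a_k(u),b_m(u),c_m(u)$ by the Lagrange interpolation formula of \S\ref{subsec-interpol}. Non-criticality of every tableau in the support guarantees that the interpolation nodes $-l'_{m1},\dots,-l'_{mm}$ are pairwise distinct, so the interpolation step is legitimate throughout. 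Before anything else I would check that $F_{NC}(V)$ is indeed an object of $\mathcal D_{\sm}$-mod: the defining relations of $\mathcal D_{\sm}$ are exactly the specialisations of the identities in Corollary \ref{relations}, whose hypothesis $\sm v = 0$ for $v\in V_{\sm}$ is automatic in a tame module.

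For faithfulness, let $f\colon V\to W$ be a morphism in $\mathcal{NCT}_{\sm}$ with $F_{NC}(f)=0$. Since any $U(\gl_n)$-morphism commutes with $\Gamma$ and therefore preserves the decomposition into $\Gamma$-eigenspaces, $F_{NC}(f)$ records $f|_{V_{\sm'}}$ at every vertex $\sm'$. Vanishing at every vertex combined with $V = \bigoplus_{\sm'} V_{\sm'}$ then forces $f=0$.

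For preservation of irreducibility, I would show that the submodule lattices of $V$ and of $F_{NC}(V)$ coincide. A subrepresentation $W = \bigoplus W_{\sm_{L'}}$ of $F_{NC}(V)$ is by definition stable under every edge $a_k(-l'_{mi})$, $b_m(-l'_{mi})$, $c_m(-l'_{mi})$. Interpolating these operators on each weight space produces the action of the full series $a_k(u)$, $b_m(u)$, $c_m(u)$ on $W$, and since the coefficients of these series generate $U(\gl_n)$, $W$ is a $U(\gl_n)$-submodule. Conversely any $U(\gl_n)$-submodule of the tame module $V$ inherits the weight decomposition (morphisms preserve $\Gamma$-eigenspaces) and is manifestly stable under each quiver edge. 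Consequently $V$ is irreducible in $\mathcal{NCT}_{\sm}$ iff $F_{NC}(V)$ is irreducible in $\mathcal D_{\sm}$-mod.

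For the non-isomorphism claim, an isomorphism $\varphi\colon F_{NC}(V)\xrightarrow{\sim} F_{NC}(W)$ is a linear bijection intertwining the weight decompositions and commuting with every edge; by the same interpolation-plus-generation argument it commutes with the action of every Drinfeld generator and hence with every element of $U(\gl_n)$, so $V\cong W$ in $\mathcal{NCT}_{\sm}$. No step here is computationally deep; the only point requiring care is the simultaneous use of (a) and (b), which I expect to be the principal thing to spell out, together with the verification that tameness plus non-criticality supply exactly the hypotheses needed to invoke Corollary \ref{relations} and Lagrange interpolation.
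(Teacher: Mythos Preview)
Your proposal is correct and is essentially the natural elaboration of the paper's own proof, which consists of the single sentence that the proposition ``is a direct consequence of the definitions of $\mathcal{NCT}_{\sm}$ and $\mathcal{D}_{\sm}$ and Corollary~\ref{relations}.'' Your interpolation-plus-generation argument (using \S\ref{subsec-interpol} and the fact that the Drinfeld generators generate $U(\gl_n)$) is exactly what is implicit in that sentence, and the care you take to invoke tameness and non-criticality to justify the hypotheses of Corollary~\ref{relations} and the Lagrange interpolation is precisely what makes that one-line claim work.
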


% Let $\mathcal T_{\sm}$ be the full subcategory of tame Gelfand-Tsetlin modules. Then we have a faithful  functor $$F_T: \mathcal {T}_{\sm}\rightarrow \mathcal {TD}_{\sm}\rm{-mod},$$  where $\mathcal {TD}_{\sm}$-mod is the subcategory  $\mathcal {D}_{\sm}$-mod consisting of tame modules $V$, for which  $a_m(-l'_{m,i})V_{\omega}=0$ ???? for any  $\omega=\omega_{L'}$ and $L'=(l'_{kl})$ in the Gelfand-Tsetlin support of $V$ and any $m=1, \ldots, n$. 

% Choose an arbitrary tableau $L$ which supports 
%$\sm$. 

%Suppose that $m$th row $(x_{m1}, \ldots, x_{m,m})$ of tableau $L$ contains a singularity, that is $x_{mi}-x_{mj}$ is integer for $i\neq j$.  
\begin{remark}
One can generalize the notion of Drinfeld quiver ${\mathcal D}_{\sm}$ to arbitrary ideals $\sm \in \Ga$ and define a general functor $F$ from a more general category of Gelfand-Tsetlin modules (not necessarily tame) to $\mathcal D_{\sm}$-mod. For this we need to introduce new relations on  ${\mathcal D}_{\sm}$ that involve higher-order derivatives of $a_m(u)$, $b_m(u)$, $c_m(u)$. We will treat this case in a subsequent paper. 
\end{remark}

\section{Irreducible   $\mathcal D_{\sm}$-modules}

 For  $\omega \in \mathcal{NC}_{\sm}$, denote by 
$\mathcal D_{\sm}(\omega) = \mathcal D_{\sm}(\omega, \omega)$ the algebra of all paths of $\mathcal D_{\sm}$ originating and ending at $\omega$.  If $V\in  \mathcal D_{\sm}$-mod ,  the subspace   $V_{\omega}$ of $V$ has a natural $\mathcal D_{\sm}(  \omega)$-module structure.
We have the following standard result which can be proven using induction and restriction functors.

\begin{lemma}\label{lem-cyclic}
\begin{itemize}
\item[(i)] For any $\omega \in \mathcal{NC}_{\sm}$ and any irreducible module $V$ of $\mathcal D_{\sm}$,  the $\mathcal D_{\sm}(  \omega)$-module $V_{\omega}$ is either zero or irreducible.
\item[(ii)] For any $\omega \in \mathcal{NC}_{\sm}$ and  any irreducible module $N$ over $\mathcal D_{\sm}(  \omega)$ there exists a unique irreducible module of $\mathcal D_{\sm}$ whose restriction on $\mathcal D_{\sm}(  \omega)$ is isomorphic to $N$.
\end{itemize}
\end{lemma}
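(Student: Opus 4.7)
The plan is to invoke the standard idempotent/adjunction machinery for representations of quivers with relations. I will view $\mathcal D_\sm$ as the quotient of the path algebra of the Drinfeld quiver by the prescribed relations, so that it becomes an associative algebra (non-unital if $\mathcal{NC}_\sm$ is infinite, but always with local units) equipped with pairwise orthogonal idempotents $\{e_\omega\}_{\omega \in \mathcal{NC}_\sm}$ indexed by the vertices. Under this identification, any module $V \in \mathcal D_\sm\text{-mod}$ decomposes as $V = \bigoplus_\omega V_\omega$ with $V_\omega = e_\omega V$, and each $V_\omega$ is a unital module over $\mathcal D_\sm(\omega) = e_\omega \mathcal D_\sm e_\omega$. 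The restriction functor $\operatorname{Res}_\omega \colon V \mapsto V_\omega$ will have as left adjoint the induction functor $\operatorname{Ind}_\omega(N) = \mathcal D_\sm e_\omega \otimes_{\mathcal D_\sm(\omega)} N$, and the unit map $N \to e_\omega \operatorname{Ind}_\omega(N)$ will be an isomorphism since $e_\omega \mathcal D_\sm e_\omega = \mathcal D_\sm(\omega)$.

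For part (i), I would assume $V_\omega \neq 0$, pick any nonzero $\mathcal D_\sm(\omega)$-submodule $W \subseteq V_\omega$, and consider $\mathcal D_\sm \cdot W$. This is a nonzero $\mathcal D_\sm$-submodule of $V$ and hence equals $V$ by irreducibility. Applying $e_\omega$ and using $e_\omega W = W$ then gives $V_\omega = e_\omega V = e_\omega(\mathcal D_\sm W) = (e_\omega \mathcal D_\sm e_\omega) W = \mathcal D_\sm(\omega) W = W$, so $V_\omega$ has no proper nonzero $\mathcal D_\sm(\omega)$-submodules and is therefore irreducible.

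For part (ii), I would set $M := \operatorname{Ind}_\omega(N)$ and use the canonical identification $e_\omega M \cong N$. Any $\mathcal D_\sm$-submodule $M' \subseteq M$ with $e_\omega M' \neq 0$ satisfies $e_\omega M' = N$ by irreducibility of $N$, and then $M' \supseteq \mathcal D_\sm \cdot (e_\omega M') = \mathcal D_\sm \cdot N = M$, since $M$ is generated as a $\mathcal D_\sm$-module by its $\omega$-component. Consequently, the sum $M^\circ$ of all proper submodules of $M$ still satisfies $e_\omega M^\circ = 0$, so $M^\circ$ is itself proper, and $V := M/M^\circ$ is the unique irreducible quotient of $M$, with $e_\omega V \cong N$. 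For uniqueness of $V$ among all irreducible $\mathcal D_\sm$-modules, I would argue by adjunction: for any irreducible $V'$ with $V'_\omega \cong N$, the adjoint of the identity $N \to V'_\omega$ produces a nonzero, hence surjective, $\mathcal D_\sm$-map $M \to V'$, so $V'$ is a simple quotient of $M$ and is therefore isomorphic to $V = M/M^\circ$. No serious obstacle is expected; the only book-keeping item is verifying that $\mathcal D_\sm e_\omega$ generates $M$ as a $\mathcal D_\sm$-module, which is automatic from the definition of $\operatorname{Ind}_\omega$.
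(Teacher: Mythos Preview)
Your proof is correct and follows exactly the route the paper indicates: the paper does not give a detailed argument but simply states that this is a ``standard result which can be proven using induction and restriction functors,'' and your proposal carries out precisely that standard argument via the idempotent decomposition $V=\bigoplus_\omega e_\omega V$, the restriction $V\mapsto e_\omega V$, and its left adjoint $\operatorname{Ind}_\omega(N)=\mathcal D_\sm e_\omega\otimes_{\mathcal D_\sm(\omega)}N$. Nothing further is needed.
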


\subsection{Irreducible tame  $\mathcal D_{\sm}$-modules}

Let $\sm$ be a maximal ideal of $\Ga$. In this subsection we will analyze irreducible modules over the Drinfeld category   $\mathcal D_{\sm}$. 
The theorem below will  be used in the  proof of Main Theorem.

\begin{theorem}\label{the-non-critical} Let $\sm$ be a   non-critical maximal ideal in $\Gamma$ and
 $V$ be an irreducible Gelfand-Tsetlin module  such that $V$ has no critical 
  maximal ideals in its Gelfand-Tsetlin support.
Then for any $\omega$ in the Gelfand-Tsetlin support of $V$,  $V_{\omega}$ is a $1$-dimensional module over $\mathcal D_{\sm}( \omega)$ whose module structure is completely determined by $\omega$. 
\end{theorem}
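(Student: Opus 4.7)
The plan is to reduce the theorem to showing that the algebra $\mathcal D_\sm(\omega)$ acts on $V_\omega$ by scalars depending only on $\omega$. By Proposition~\ref{prop--tame}, $V$ is tame; hence every $V_\sm$ in the support of $V$ is annihilated by $\sm$ and consists of simultaneous $\Gamma$-eigenvectors, which guarantees that the hypotheses of Corollary~\ref{relations} hold on $V_\sm$ for every vertex $\sm$ visited by any path in $\mathcal D_\sm$. Proposition~\ref{prop-categories} combined with Lemma~\ref{lem-cyclic}(i) shows that $V_\omega$ is an irreducible module over $\mathcal D_\sm(\omega)$, so once the scalar action is established both $\dim V_\omega = 1$ and the uniqueness of the $\mathcal D_\sm(\omega)$-module structure follow at once.

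I would establish the scalar action by induction on the length $N$ of a closed path $w = X_N \cdots X_1$ at $\omega$. The vertex generators $a_k(-l_{mi})$ act by the scalars $\alpha_k(-l_{mi},\omega)$ by the definition of $\mathcal D_\sm$, so it is enough to treat the case that each $X_j$ is one of $b_m(-l'_{mi})$ or $c_m(-l'_{mi})$. Since the walk on the tableau lattice induced by $w$ is closed and nontrivial, there is some row--column coordinate $(m,i)$ in which the walk attains a strict local extremum; this produces an adjacent pair $b_m(-(M-1))$, $c_m(-M)$ in the subsequence of operators affecting $(m,i)$, where $M$ is the extremal value. The two operators of this pair need not be adjacent in $w$ itself; between them lie intermediates $Y_j$ each of which is either at a level $\neq m$, or at level $m$ in a column $\neq i$. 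My strategy is to redistribute every $Y_j$ to one side of the matched pair --- either past $c_m(-M)$ on the left or past $b_m(-(M-1))$ on the right --- using the commutation relations of Corollary~\ref{relations}, and then to collapse the now-adjacent pair via Corollary~\ref{relations}(v) into the scalar $-\alpha_{m+1}(-(M-1),\omega_{p-1})\alpha_{m-1}(-M,\omega_{p-1})$ at the local vertex $\omega_{p-1}$. What remains is a closed word of length $N-2$, which is handled by induction.

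The main obstacle is the verification that every intermediate $Y_j$ can indeed be pushed to one side, since Corollary~\ref{relations}(ii) does not supply commutation of $c_m$ with $c_{m\pm 1}$ or of $b_m$ with $b_{m\pm 1}$. The case analysis proceeds as follows. If $Y_j = b_l(\cdot)$ or $c_l(\cdot)$ with $|l-m| \geq 2$, Corollary~\ref{relations}(i)--(ii) permits $Y_j$ to pass either operator. If $Y_j = c_{m\pm 1}(\cdot)$ the same Corollary~\ref{relations}(i) shows $[c_{m\pm 1}, b_m] V_\sm = 0$, so $Y_j$ moves rightward past $b_m$; symmetrically, $Y_j = b_{m\pm 1}(\cdot)$ moves leftward past $c_m$. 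Finally if $Y_j$ is $b_m$ or $c_m$ at a column $j \neq i$, then Corollary~\ref{relations}(ii) (with $l=m$) and (iii) apply, the latter requiring the two arguments to be distinct --- and this distinctness is guaranteed precisely by the non-criticality assumption, which keeps row-$m$ entries of every vertex visited by $w$ pairwise distinct. With each case covered, the redistribution is always possible, the induction closes, and the theorem follows.
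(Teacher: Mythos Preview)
Your overall strategy coincides with the paper's: show that each closed monomial in the $b$'s and $c$'s acts by a scalar, by locating a $b_m$--$c_m$ pair, clearing the letters between them via Corollary~\ref{relations}, and collapsing the pair with (iv) or (v). The difference is in how the pair is selected. The paper does not take an arbitrary local extremum: it takes the \emph{first} $b$ reading from the right, so that every letter between this $b_m(-r)$ and its partner $c_m(-(r+1))$ is necessarily a $c$. Then $b_m$ slides past each of these directly --- by (i) when the level differs and by (iii) when it is a $c_m$ in another column --- and no commutation \emph{among the intermediates themselves} is ever required.

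Your redistribution argument has a gap at precisely this point. The case analysis checks only that each intermediate $Y_j$ commutes with \emph{one} flank of the outer pair; it never checks that $Y_j$ commutes with the other intermediates standing in its way. Consider the closed word
\[
c_m\bigl(-(l{+}1)\bigr)\;c_{m+1}(-q)\;b_{m+1}\bigl(-(q{-}1)\bigr)\;b_m(-l)
\]
and choose the extremum at $(m,i)$. The two intermediates act on the \emph{same} column of row $m{+}1$. Your rule sends $c_{m+1}$ to the right and $b_{m+1}$ to the left, so each must first cross the other; but they cannot be interchanged, since only one of $q$ and $q{-}1$ is an entry of row $m{+}1$ at the relevant vertex, so (iii) is inapplicable and the swapped product is not even a path in $\mathcal D_{\sm}$. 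Pushing in the opposite directions is blocked as well, because $[c_{m+1},c_m]$ and $[b_{m+1},b_m]$ are not controlled by (ii). For this choice of extremum the induction stalls. The remedy is exactly the paper's choice: take the first sign change from the right, so that the intermediates are all of one type and the obstruction cannot arise.
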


\begin{proof} By Proposition \ref{prop--tame} the module $V$ is tame.  Without loss of generality we may assume that $\omega = \sm$. 
%Consider an irreducible module $V$   in the category $\mathcal {TD}_{\sm}$-mod  which has no critical   maximal ideals in its Gelfand-Tsetlin support. Let $\sm$ be an arbitrary maximal ideal of $\Gamma$  in the Gelfand-Tsetlin support of $V$. 

Let $\sm=\sm_L$ and $L=(l_{ij})$. Take any nonzero $v \in V_{\sm}$. Then $v$ is an eigenvector of all $a_k(-l_{mi})$. We will show that  the dimension of the space $ \mathcal D_{\sm}(\sm)v$ is $1$. This will imply the statement of the theorem thanks to  Lemma \ref{lem-cyclic}.

Consider an arbitrary nonzero element $u\in \mathcal D_{\sm}(\sm)$. Then $u$ is a linear combination of monomials formed by $b_i(t)$ and $c_i(t)$, $i=1, \ldots, n$ for different values of $t$. Without loss of generality we may assume that $u$ equals a single monomial.  The first element (reading from the right) of this monomial is one of the  operators $c_m(-l_{mk}), b_m(-l_{mk})$ for some  
 $m=1, \ldots, n$ and some  $k=1, \ldots, m$.  Suppose that this element is $c_m(-l_{mk})$.  Without loss of generality we may assume that $k=1$. Then the monomial contains the  elements
  $$c_m[l_{m1}, t]:=c_m(-l_{m1}+t)\ldots c_m(-l_{m1}+1)c_m(-l_{m1}),$$
for some $t \geq 0$.    Note that since 
$V$ is non-critical, for $j \neq 1$,  $c_m[l_{m1}, t]$ and $c_m[l_{mj}, s]$ commute by Corollary \ref{relations}.  Hence, we can rewrite the rightmost part of the monomial as

$$\pi_c^1:=\overrightarrow{\Pi}_{i=1, \ldots, n-1}c_i[l_{i1}, t_{i1}]\ldots c_i[l_{ii}, t_{ii}],$$
for some $t_{ij}\in \{-1, 0, \ldots\}$. By definition, $c_i[l_{ij}, -1]=1$. Since our monomial is an element of $\mathcal D_{\sm}(\sm)$ it must contain 
 respective elements $b_i(x_i)$. Suppose  $b_m(-r_{mk})$ is the first such element that appears in the monomial. Then $\pi_c^1 V_{\sm}\subset V_{\sn}$, $\sn=\sn_R$ and  
 $R=(r_{ij})$.  Let   $r_{mk}=l_{mk}-t$ for some nonnegative integer $t$.  
 
 Assume first that $t=0$.  Then   $t_{mk}=-1$ and hence $b_m(-r_{mk})$ commutes with $\pi_c^1$ and 
$b_m(-r_{mk})\pi_c^1\notin \mathcal D_{\sm}(\sm)$. 

Set also  
$$
b_m[x, r]:=b_m(-x+r)\ldots b_m(-x+1)b_m(-x)$$
 with the same convention for $r=-1$ as above. Then we may assume that our monomial contains 
$$\pi_b^1=\overrightarrow{\Pi}_{i=1, \ldots, n-1}b_i[l_{i1}, r_{i1}]\ldots b_i[l_{ii}, r_{ii}],$$
for some $r_{ij}\in \{-1, 0, \ldots\}$.  Finally, we can assume that our monomial contains

$$\Pi_j \pi_c^j\Pi_s \pi_b^s,$$
such that every $\pi_c^j$ commutes with every $\pi_b^s$.  We see again that $\Pi_j \pi_c^j\Pi_s \pi_b^s \notin  \mathcal D_{\sm}(\sm)$. 

Assume now that $t>0$. Then $\pi_c^1$ must contain $c_m(-r_{mk}-1)$. The action of $b_m(-r_{mk})$
commutes with the action of $c_m(x)$ if $x\neq -r_{mk}$  and with the action of any $c_s(y)$ if $s\neq m$ by Corollary  \ref{relations}. Thus,  we can move $b_m(-r_{mk})$ next to $c_m(-r_{mk}-1)$ in the monomial and then apply  Corollary  \ref{relations} again. As a result,     
$b_m(-r_{mk})c_m(-r_{mk}-1)$ can be replaced by a scalar and can be removed from the product. Since the original monomial is  in $\mathcal D_{\sm}(\sm)$, using the procedure described above,  we will eventually remove all 
$b$'s and all $c$'s. This shows that $\mathcal D_{\sm}(\sm) v$ is $1$-dimensional. Applying Lemma \ref{lem-cyclic} and the fact that $V$ is  irreducible, we conclude that 
$V_{\sm}$ is $1$-dimensional and its structure as a $\mathcal D_{\sm}(\sm)$-module is completely determined by $\sm$. This completes the proof of the theorem.  \end{proof}

%Using correspondence between the tame Gelfand-Tsetlin modules over $\gl_n$ and the tame modules over the category $\mathcal D_{\sm}$ (via the functor $F$) one can  immediately obtain the following.

%\begin{corollary}
%If $V$ is a irreducible module over $\mathcal D_{\sm}$ that has no critical maximal ideals  in its  Gelfand-Tsetlin support then $V$ is tame. \end{corollary}

Combining Lemma \ref{lem-cyclic}  and Theorem \ref{the-non-critical}  we  obtain the following.

\begin{corollary}\label{cor-uniqueness}
For any non-critical maximal ideal $\sm$ of $\Ga$ there exists at most one (up to an isomorphism)
 irreducible module in the category $\mathcal {D}_{\sm}{\rm -mod}$  such that $V$ has no critical 
 maximal ideals in its Gelfand-Tsetlin support.

\end{corollary}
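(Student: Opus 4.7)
The plan is to reduce the corollary to a direct combination of Theorem \ref{the-non-critical} and Lemma \ref{lem-cyclic}. The key observation is that the monomial-reduction argument carried out in the proof of Theorem \ref{the-non-critical} invokes only the relations collected in Corollary \ref{relations}, and these relations are precisely among the defining relations of the quiver $\mathcal D_{\sm}$. Consequently, the same argument applies verbatim to any irreducible object $V$ of $\mathcal D_{\sm}\text{-mod}$ whose Gelfand-Tsetlin support contains no critical maximal ideal: for every $\omega$ in the support of $V$ and every nonzero $v\in V_{\omega}$, one shows that $\mathcal D_{\sm}(\omega)\,v$ is one-dimensional and that the action of each $a_k(-l_{mi})$, together with all scalars produced by pairing a $b_m(-l_{mi})$ with a $c_m(-l_{mi}\pm 1)$ via Corollary \ref{relations}(iv)--(v), is determined solely by $\omega$ (equivalently by $\sm$).

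By Lemma \ref{lem-cyclic}(i), $V_{\omega}$ is an irreducible $\mathcal D_{\sm}(\omega)$-module, so $V_{\omega}=\mathcal D_{\sm}(\omega)\,v$ is the unique one-dimensional $\mathcal D_{\sm}(\omega)$-module $N_{\omega}$ prescribed by $\sm$. Lemma \ref{lem-cyclic}(ii) then reconstructs $V$ uniquely up to isomorphism from the data of $N_{\omega}$. Thus any two irreducibles $V,V'$ in $\mathcal D_{\sm}\text{-mod}$ satisfying the hypothesis and sharing at least one common vertex $\omega$ in their Gelfand-Tsetlin supports must be isomorphic.

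The step requiring the most care is the implicit common-vertex issue: one must exclude the possibility of two irreducibles having disjoint supports within $\mathcal{NC}_{\sm}$. I expect this to follow from Corollary \ref{relations}(iv)--(v): if $v\in V_{\omega}$ is nonzero and the scalar $-a_{m+1}(-l_{mi}+1)\,a_{m-1}(-l_{mi})$ at $\omega$ is nonzero, then necessarily $c_m(-l_{mi})v\neq 0$, so the neighboring vertex $\sm_{L-\delta_{mi}}$ lies in the support of $V$, and the dual statement holds for $b_m$. Iterating this propagation through the connected component $\mathcal{NC}_{\sm}$ (and noting that a vanishing of the relevant $a$-scalars at some vertex is itself a condition determined by $\sm$ alone, hence shared by $V$ and $V'$) forces the Gelfand-Tsetlin supports of any two such irreducibles to coincide. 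Once a common vertex is found, the previous paragraph yields $V\cong V'$, completing the proof.
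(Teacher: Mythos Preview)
Your first two paragraphs are exactly the paper's argument: the paper's proof is the single sentence ``Combining Lemma~\ref{lem-cyclic} and Theorem~\ref{the-non-critical} we obtain the following,'' and your observation that the monomial-reduction in Theorem~\ref{the-non-critical} uses only the quiver relations is precisely what licenses applying that argument inside $\mathcal D_{\sm}\text{-mod}$ rather than to actual Gelfand--Tsetlin modules.

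Your third paragraph raises a legitimate point that the paper leaves implicit, but your propagation argument does not close the gap. When the scalar $-a_{m+1}(-l_{mi}+1)a_{m-1}(-l_{mi})$ vanishes, relations (iv)--(v) of Corollary~\ref{relations} impose no constraint on whether $c_m(-l_{mi})v$ is zero, so the support of an irreducible $V$ is \emph{not} determined by $\sm$ along such an edge; two irreducibles can genuinely differ there. Thus the sentence ``forces the Gelfand--Tsetlin supports of any two such irreducibles to coincide'' is unjustified. Fortunately you do not need equal supports, only a common vertex, and the natural choice is $\omega=\sm$ itself. Note that every vertex of $\mathcal D_{\sm}$ is non-critical by construction, so the hypothesis ``no critical maximal ideals in the support'' is vacuous; the intended reading (consistent with how the corollary is used in Remark~4.4 and the proof of Theorem~\ref{the-isolated}) is that $\sm$ lies in the support of $V$. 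Under that reading, take $\omega=\sm$ for both $V$ and $V'$, and your second paragraph already finishes the proof. I would recommend replacing your third paragraph with a one-line remark that the common vertex may be taken to be $\sm$.
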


\begin{remark}
Let $\sm$  be a generic maximal ideal and $M$  be an irreducible Gelfand-Tsetlin module over $\gl_n$ that has $\sm$ in its Gelfand-Tsetlin  support. Then  $M$ is non-critical. Let $F_{NC}(M) = V$. Since any maximal ideal $\sm'$ in the Gelfand-Tsetlin 
support of $M$ is generic we have $\dim M_{\sm'}=1$, implying $\dim V_{\sm'}=1$ \cite{FGR1}.  Moreover, $M$ is a unique Gelfand-Tsetlin module that has $\sm$ in its Gelfand-Tsetlin  support.  Corollary \ref{cor-uniqueness} gives an alternative proof of this result.
\end{remark}

%Alternatively this statement can be proven using similar reasoning as the one used in the the proof of Theorem \ref{the-non-critical}.

\section{Classification of  irreducible $1$-singular  Gelfand-Tsetlin modules}

If $V$ contains a critical maximal ideal in its Gelfand-Tsetlin support, then $V$ need not to be a tame module.  Nevertheless we have the following.

\begin{theorem}\label{the-critical}[\cite{FGR2}, Theorem 7.2(ii)]\label{thm-critical} Let  $M$ be an irreducible Gelfand-Tsetlin module   such that $V$ contains a  critical $1$-singular
  maximal ideal $\sm$  of $\Ga$ in its Gelfand-Tsetlin support. Then
  $M_{\sm}$ is  $1$-dimensional. Moreover, $M$ is a unique such module with  $M_{\sm}\neq 0$. 
  %over $\mathcal D_{\sm}(\sm)$.  Moreover, its structure as a 
%$\mathcal D_{\sm}(\sm)$-module is completely determined by $\sm$, implying the uniqueness of $V$ with such property.
In particular, there exists at most one (up to an isomorphism) such irreducible tame module. 
\end{theorem}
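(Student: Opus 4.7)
My plan is the following. Write $\sm = \sm_L$ with $L = (l_{ij})$ a critical $1$-singular tableau whose unique singular pair is $l_{mi_0} = l_{mj_0}$ in row $m$. The strategy is to transfer all computations to nearby non-critical ideals, where Proposition \ref{prop--tame} and Theorem \ref{the-non-critical} apply, and then use Lagrange interpolation together with the quantum-minor identities of Corollary \ref{relations}(iv)-(v) to pin down the action of every Drinfeld generator on a chosen generator of $M_\sm$.

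To bound $\dim M_\sm$ from above, I would pick a nonzero $v \in M_\sm$ with $\sm v = 0$ and examine its images under $c_m(-l_{mi})$ and $b_m(-l_{mi})$. By Proposition \ref{prop-der-crit}, these images lie in $M_{\sm_{L \pm \delta_{mi}}}$; for every index $i$ (including $i_0$ and $j_0$, since displacing the critical pair by $\pm 1$ resolves the coincidence) the neighboring ideal is non-critical, so by Theorem \ref{the-non-critical} the corresponding weight spaces are one-dimensional and determined by the ideal alone. The operators $b_m(-l_{mi_0}+1)\,c_m(-l_{mi_0})$ and $c_m(-l_{mi_0}-1)\,b_m(-l_{mi_0})$ act on $M_\sm$ as the explicit scalars prescribed by Corollary \ref{relations}(iv)-(v); because $l_{mi_0} = l_{mj_0}$, the analogous expressions written with index $j_0$ produce the same scalars. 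This forced coincidence supplies a linear dependence between any two candidate elements of $M_\sm$, yielding $\dim M_\sm \leq 1$.

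For uniqueness, fix a generator $v \in M_\sm$. The action of $\Ga$ on $v$ is already determined by $\sm$. For every adjacent non-critical ideal $\sm'$ reachable by a single $b_m(-l_{mi})$ or $c_m(-l_{mi})$, Theorem \ref{the-non-critical} says that $M_{\sm'}$ is one-dimensional with $\mathcal D_{\sm'}(\sm')$-module structure depending only on $\sm'$. Hence each $b_m(-l_{mi}) v$ and $c_m(-l_{mi}) v$ is determined up to a scalar by $\sm$, and these scalars are in turn fixed (up to a simultaneous rescaling absorbed into the choice of $v$) by the quantum-minor identities of Corollary \ref{relations}(iv)-(v). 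Reassembling the full series $b_m(u)$ and $c_m(u)$ by the Lagrange interpolation described in Section \ref{subsec-interpol} then shows that the action of $U(\gl_n)$ on $v$, and consequently the isomorphism class of $U(\gl_n) v \subseteq M$, is completely determined by $\sm$. Since $M$ is irreducible, $M = U(\gl_n) v$, and uniqueness follows.

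The principal obstacle is that Proposition \ref{prop--tame} and Theorem \ref{the-non-critical} \emph{do not apply at $\sm$ itself}, which is precisely where tameness may fail. The entire argument must therefore transit through non-critical neighbors of $\sm$ and extract information about $M_\sm$ as a kind of limit of non-critical data. The technical core is verifying the non-degeneracy of this specialization: one must check that the coincidence $l_{mi_0} = l_{mj_0}$ in the quantum-minor identity produces a genuinely non-trivial relation on $M_\sm$ rather than becoming tautological, which requires careful tracking of the cancellations in the polynomials $a_{m \pm 1}(u)$ and in the interpolated forms of $b_m(u)$ and $c_m(u)$ near the critical value.
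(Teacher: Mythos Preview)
The paper does not give a proof of this theorem at all: it is imported wholesale from \cite{FGR2}, Theorem~7.2(ii), and is used in the present paper only as a black box in the proof of Theorem~\ref{the-isolated}. So there is no ``paper's own proof'' to compare against.

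Your proposed argument, however, has a genuine gap that is more serious than the obstacle you flag at the end. You want to apply Proposition~\ref{prop--tame} and Theorem~\ref{the-non-critical} to $M$ at the non-critical neighbours $\sm_{L\pm\delta_{mi}}$. But the hypothesis of both results is a condition on the \emph{entire} Gelfand--Tsetlin support of the module: Proposition~\ref{prop--tame} requires that $V$ have \emph{no} critical ideals in its support, and the same hypothesis is carried into Theorem~\ref{the-non-critical} (its proof begins by invoking Proposition~\ref{prop--tame} to obtain tameness). Your module $M$ has the critical ideal $\sm$ in its support by assumption, so neither result applies to $M$ at any point whatsoever --- not at $\sm$, and not at the non-critical neighbours either. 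The paper even remarks, just before the statement, that a module with a critical ideal in its support ``need not be a tame module''. In particular you have no a priori control on $\dim M_{\sm'}$ for the neighbouring $\sm'$, and the Drinfeld-category machinery (the functor $F_{NC}$, Lemma~\ref{lem-cyclic}, etc.) is set up only for modules in $\mathcal{NCT}_{\sm}$, which $M$ is not.

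A second, independent weakness: even granting one-dimensionality at the neighbours, your sentence ``this forced coincidence supplies a linear dependence between any two candidate elements of $M_\sm$'' does not constitute an argument. Knowing that $b_m(-l_{mi_0}+1)c_m(-l_{mi_0})$ and the analogous expression with $j_0$ act by the same scalar on a $\Gamma$-eigenvector $v$ does not by itself bound $\dim M_\sm$; you would still need to rule out generalized eigenvectors and show that every element of $M_\sm$ is reached from $v$ by such operators. The actual proof in \cite{FGR2} proceeds by a quite different, explicit analysis of the module $V(L)$ and of $U_n/U_n\sm$, rather than by the Drinfeld-generator interpolation you propose.
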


Now we are ready to prove the conjecture stated in the introduction and, in particular, complete the classification of irreducible $1$-singular modules.

\begin{theorem}\label{the-isolated}
Let $M$ be an irreducible $1$-singular Gelfand-Tsetlin $\gl_n$-module, and $\sm=\sm_L$ be  a maximal ideal of $\Ga$ in the Gelfand-Tsetlin  support of $M$. 
Then $M$ is isomorphic to a subquotient of $V(L)$.
\end{theorem}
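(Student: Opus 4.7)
The plan is to match $M$ with a subquotient of $V(L)$ produced by Theorem~\ref{theorem-fgr2}(ii) by invoking uniqueness of irreducible Gelfand--Tsetlin modules having $\sm$ in their support. Since the center of $U(\gl_n)$ is contained in $\Gamma$, the support of $M$ lies inside a single set $\{\sm_{L'} : L' \in L_{\mathbb Z}\}/G$, so the dichotomy ``$M$ has a critical ideal in its support or not'' is well-posed. I would split the argument along this dichotomy.

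If some $\sm' = \sm_{L'}$ in the support of $M$ is critical (with $L' \in L_{\mathbb Z}$), then Theorem~\ref{thm-critical} gives that $M$ is the unique irreducible Gelfand--Tsetlin module with $M_{\sm'} \ne 0$. Theorem~\ref{theorem-fgr2}(ii) applied to $L'$ yields an irreducible subquotient $N$ of $V(L') \simeq V(L)$ (using Theorem~\ref{theorem-fgr2}(iv)) with $N_{\sm'} \ne 0$, and uniqueness forces $M \simeq N$, which puts $M$ inside $V(L)$.

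Otherwise $M$ has no critical ideal in its support. By Proposition~\ref{prop--tame}, $M$ is tame. Because the Drinfeld generators $b_m(-l'_{mi})$ and $c_m(-l'_{mi})$ shift a single entry of $L'$ by $\pm 1$ (Proposition~\ref{prop-der-crit}) and the support of $M$ avoids critical ideals, that support is confined to the single connected component $\mathcal{NC}_\sm$, so $M \in \mathcal{NCT}_\sm$. Applying $F_{NC}$ sends $M$ to an irreducible $\mathcal D_\sm$-module with $\sm$ in its support, and Corollary~\ref{cor-uniqueness} asserts such an object is unique up to isomorphism. To finish, I need to produce an irreducible subquotient $N'$ of $V(L)$ that is likewise non-critical with $\sm$ in its support: once in hand, $N' \in \mathcal{NCT}_\sm$, Corollary~\ref{cor-uniqueness} gives $F_{NC}(N') \simeq F_{NC}(M)$, and the faithfulness of $F_{NC}$ from Proposition~\ref{prop-categories} yields $M \simeq N'$.

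Producing this non-critical subquotient $N'$ is the main technical obstacle. My approach would be to examine the cyclic submodule $W \subseteq V(L)$ generated by a nonzero vector of $V(L)_\sm$ (which is nonzero since $L \in L_{\mathbb Z}$), and extract from $W$ a composition factor whose support lies entirely in $\mathcal{NC}_\sm$: using the bound $\dim V(L)_{\sm''} \le 2$ from Theorem~\ref{theorem-fgr2}(v), the quiver structure of the Drinfeld action, and the at-most-two bound on non-isomorphic irreducibles with $\sm$ in their support recalled in the introduction, one argues that such a composition factor must exist, and the uniqueness in the Drinfeld category then identifies it with $M$.
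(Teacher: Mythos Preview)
Your critical case is fine and matches the paper. The gap is in the non-critical case, precisely at the point you flag as ``the main technical obstacle'': you need a non-critical irreducible subquotient $N'$ of $V(L)$ with $\sm$ in its support, and nothing you have written guarantees that such an $N'$ exists. The subquotients $S$ of $V(L)$ with $S_{\sm}\neq 0$ furnished by Theorem~\ref{theorem-fgr2}(ii) may very well have critical ideals in their support; in that case they do not lie in $\mathcal{NCT}_{\sm}$, Corollary~\ref{cor-uniqueness} does not apply to them, and your comparison $F_{NC}(M)\simeq F_{NC}(N')$ never gets off the ground. The sketch you give (take a cyclic submodule, use the bound $\dim V(L)_{\sm''}\le 2$, use the at-most-two bound on isomorphism classes) does not produce a non-critical composition factor; those bounds constrain multiplicities, not the supports of the individual composition factors.

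The paper sidesteps this difficulty by bringing in an ingredient you do not use: the universal module $U_n/U_n\sm$. Every irreducible Gelfand--Tsetlin module with $\sm$ in its support is a subquotient of $U_n/U_n\sm$, and by \cite{FGR2} one has $\dim (U_n/U_n\sm)_{\sm}\le 2$. Now take the subquotients $S,S'$ of $V(L)$ with $\sm$ in their support. If $\dim S_{\sm}=2$, the multiplicity bound forces $M\simeq S$. If $\dim S_{\sm}=1$ and $S\not\simeq S'$, then $S$ and $S'$ already account for the full multiplicity $2$, so again $M$ is one of them. Only in the remaining case $S\simeq S'$ does one need the Drinfeld uniqueness, and there one first argues (via Theorem~\ref{thm-critical}) that $M$ can have no critical ideal in its support and then invokes Theorem~\ref{the-non-critical}. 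The point is that the $U_n/U_n\sm$ counting handles the situations where the subquotients of $V(L)$ might be critical, so one never has to manufacture a non-critical $N'$ directly.
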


\begin{proof}
 Consider a $1$-singular maximal ideal $\sm=\sm_L$. If $\sm$
 is critical then such  module $M$  is unique by Theorem \ref{the-critical} and, hence, it is isomorphic to a subquotient of $V(L)$. Suppose now that $\sm$ is not critical. Then 
 $V(L)$ has at least one irreducible subquotient $S$  with $\sm$ in its Gelfand-Tsetlin support. Recall that all Gelfand-Tsetlin multiplicities of $U_n/U_n\sm$ are at most $2$ by \cite{FGR2}, Corollary 7.1 (cf. also \cite{FO}, Theorem 4.12). Also note that any irreducible Gelfand-Tsetlin $\gl_n$-module $N$ that has $\sm$ in its Gelfand-Tsetlin support is a subquotient of $U_n/U_n\sm$. Then we immediately have that 
 $M\simeq S$ if  $\dim S_{\sm}=2$. Assume now that $\dim S_{\sm}=1$. Then there exists another irreducible subquotient $S'$ of $V(L)$ with $\sm$ in the Gelfand-Tsetlin support. If $S'$ is not isomorphic to $S$ then $M$ is isomorphic to one of them since both these modules are subquotients of $U_n/U_n\sm$. It remains to consider the case when 
  $S'\simeq S$. Suppose $M$ is not isomorphic to $S$. Then 
  $M$ can not have any critical maximal ideals  in its Gelfand-Tsetlin support  by Theorem \ref{thm-critical}.  Indeed, otherwise $M$ is a subquotient of $V(L)$ and thus isomorphic to $S$. 
  Applying Proposition \ref{prop--tame}  we conclude that $M$ is a tame module. But an irreducible tame  Gelfand-Tsetlin module with no critical maximal ideals  that has $\sm$ in the Gelfand-Tsetlin support is unique (up to an isomorphism) by Theorem \ref{the-non-critical}. Therefore, $M\simeq S$ which completes the proof. 
 \end{proof}

\end{document}